\definecolor{mblue}{rgb}{0,0,.8}
\newcommand{\N}{\mathbb N}
\newcommand{\Z}{\mathbb Z}
\newcommand{\Q}{\mathbb Q}
\newcommand{\Qbar}{\overline{\Q}}
\newcommand{\Zbar}{\overline{\Z}}
\newcommand{\F}{\mathbb F}
\newcommand{\Fbar}{\overline{\F}}
\newcommand{\C}{\mathbb C}
\newcommand{\p}{\mathfrak p}
\newcommand{\q}{\mathfrak q}
\newcommand{\OO}{\mathcal O}
\def\cross{\times}
\newcommand{\Hom}{\operatorname{Hom}}
 \newcommand{\abcd}[4]{\left(
         \begin{smallmatrix}#1&#2\\#3&#4\end{smallmatrix}\right)}
\newcommand{\stroke}[1]{[#1]}
\newcommand{\diam}[1]{\langle #1 \rangle}
\newcommand{\calS}{\mathcal{S}}
\newcommand{\TT}{\mathbb{T}}
\newcommand{\fm}{\mathfrak{m}}
\newcommand{\fP}{\mathfrak{P}}
\newcommand{\Zmod}[1]{\overline{\Z/{#1}\Z}}
\newcommand{\ilim}{\underset{\rightarrow}{\mathrm{lim}}}
\newcommand{\prlim}{\underset{\leftarrow}{\mathrm{lim}}}
\newtheorem{thm}{Theorem}
\newtheorem{lem}[thm]{Lemma}
\newtheorem{dfn}[thm]{Definition}
\newtheorem{prop}[thm]{Proposition}
\newtheorem{cor}[thm]{Corollary}
\newtheorem*{question}{Question}
\DeclareMathOperator{\GL}{GL} \DeclareMathOperator{\SL}{SL}  
\DeclareMathOperator{\Gal}{Gal}   
  \DeclareMathOperator{\Tr}{tr}
\DeclareMathOperator{\Frob}{Frob}
\DeclareMathOperator{\End}{End}
\def\dash---{\thinspace---\hskip.16667em\relax}
\def\o{{\mathcal O}}
\begin{document}

\title{On modular Galois representations\\ modulo prime powers.}
\author[Imin Chen, Ian Kiming, Gabor Wiese]{Imin Chen, Ian Kiming, Gabor Wiese}
\address[Imin Chen]{Department of Mathematics, Simon Fraser University, 8888 University Drive, Burnaby, B.C., V5A 1S6, Canada}
\email{ichen@math.sfu.ca}
\address[Ian Kiming]{Department of Mathematical Sciences, University of Copenhagen, Universitetsparken 5, DK-2100 Copenhagen \O , Denmark}
\email{kiming@math.ku.dk}
\address[Gabor Wiese]{Universit\'e du Luxembourg, Facult\'e des Sciences, de la Technologie et de la Communication, 6, rue Richard Coudenhove-Kalergi, L-1359 Luxembourg, Luxembourg}
\email{gabor.wiese@uni.lu}

\begin{abstract} We study modular Galois representations mod $p^m$. We show that there are three progressively weaker notions of modularity for a Galois representation mod $p^m$: we have named these `strongly', `weakly', and `dc-weakly' modular. Here, `dc' stands for `divided congruence' in the sense of Katz and Hida. These notions of modularity are relative to a fixed level $M$.

Using results of Hida we display a level-lowering result (`stripping-of-powers of $p$ away from the level'): A mod $p^m$ strongly modular representation of some level $Np^r$ is always dc-weakly modular of level $N$ (here, $N$ is a natural number not divisible by $p$).

We also study eigenforms mod $p^m$ corresponding to the above three notions. Assuming residual irreducibility, we utilize a theorem of Carayol to show that one can attach a Galois representation mod $p^m$ to any `dc-weak' eigenform, and hence to any eigenform mod $p^m$ in any of the three senses.

We show that the three notions of modularity coincide when $m=1$ (as well as in other, particular cases), but not in general.
\end{abstract}

\maketitle

\section{Introduction}\label{intro}

Let $p$ be a prime number, which remains fixed throughout the article. Let $N$ be a natural number not divisible by $p$. All number fields in the article are taken inside some fixed algebraic closure $\Qbar$ of~$\Q$ and we fix once and for all field embeddings $\Qbar \hookrightarrow \Qbar_p$, $\Qbar \hookrightarrow \C$, as well as a compatible isomorphism $\C \cong \Qbar_p$. Here, $\Qbar_p$ is a fixed algebraic closure of $\Q_p$. We also denote by $\Zbar_p$ the ring of integers of $\Qbar_p$.

Let $f = \sum_{n=1}^\infty a_n(f) q^n \in S_k(\Gamma_1(N p^r))$ be a normalized cuspidal eigenform for all Hecke operators $T_n$ with $n \ge 1$ (normalized means $a_1(f) = 1$). By Shimura, Deligne, and Serre, there is a continuous Galois representation
$$
\rho = \rho_{f,p} : \Gal(\Qbar/\Q) \rightarrow \GL_2(\Qbar_p)$$ attached
to $f$, which is unramified outside $Np$, and which satisfies $$\Tr
\rho(\Frob_\ell) = a_\ell(f) \text{ and } \det \rho(\Frob_\ell) =
\ell^{k-1} \chi(\ell),
$$ for primes $\ell \nmid Np$, where $\chi$ is the nebentypus of $f$.

By continuity and compactness, the Galois representation descends to a representation
$$
\rho_{f,\Lambda,p} : \Gal(\Qbar/\Q) \rightarrow \GL_2(\o_K),
$$
where $\o_K$ is the ring of integers of a finite extension $K$ of $\Q_p$. This representation depends in general on a choice of $\o_K$-lattice $\Lambda$ in~$K^2$. Let $\p = \p_K$ be the maximal ideal of $\o_K$. We wish to consider the reduction mod $\p^m$ of $\rho_{f,\Lambda,p}$. However, because of ramification, the exponent $m$ is not invariant under base extension.

For this reason, it is useful, following \cite{taixes-wiese}, to define $\gamma_K(m) := (m-1) e_{K/\Q_p}+1$, with $e_{K/\Q_p}$ the ramification index of $K/\Q_p$. This definition is made precisely so that the natural maps below yield injections of rings, i.e.\ ring extensions of $\Z/p^m\Z$,
$$
\Z/p^m\Z \hookrightarrow \o_K/\p_K^{\gamma_K(m)} \hookrightarrow \o_L/\fP_L^{\gamma_L(m)}
$$
for any finite extension $L/K$ (with $\fP_L$ the prime of $L$ over $\p_K$ in $K$). We can thus form the ring
$$
\Zmod{p^m} := \ilim_K \o_K/\p_K^{\gamma_K(m)},
$$
which we also consider as a topological ring with the discrete topology. When we speak of $\alpha \pmod {p^m}$ for $\alpha \in \Zbar_p$, we mean its image in $\Zmod{p^m}$. In particular, for $\alpha, \beta \in \overline{\Z}_p$, we define $\alpha \equiv \beta \pmod{p^m}$ as an equality in $\Zmod{p^m}$, or equivalently, by $\alpha - \beta \in \p_K^{\gamma_K(m)}$, where $K/\Q_p$ is any finite extension containing $\alpha$ and $\beta$.

In this spirit, we define the reductions
$$
\rho_{f,\Lambda,p,m} : \Gal(\Qbar/\Q) \rightarrow \GL_2(\Zmod{p^m})
$$
for any $m \in \N$. The representation $\rho_{f,\Lambda,p,m}$ has the property:
$$
\Tr \rho_{f,\Lambda,p,m} (\Frob_\ell) = (a_\ell(f) \bmod p^m)
\leqno{(\ast)}
$$
for all primes $\ell\nmid Np$.
\smallskip

From \cite[Th\'eor\`eme 1]{carayol} and the Chebotarev density theorem, a continuous Galois representation $\rho_{p,m} : \Gal(\Qbar/\Q) \rightarrow \GL_2(\Zmod{p^m})$ is determined uniquely up to isomorphism by $\Tr \rho_{p,m}(\Frob_\ell)$ for almost all (i.e., all but finitely many) primes $\ell$, assuming the residual representation is absolutely irreducible. It follows that if there is one choice of $\o_K$-lattice $\Lambda$ as above such that
$\rho_{f,\Lambda,p,1}$ is absolutely irreducible, then $\rho_{f,p,m} = \rho_{f,\Lambda,p,m}$ is determined uniquely up to isomorphism. In such a case, we say $\rho_{f,p,m}$ is the mod $p^m$ Galois representation attached to $f$.
\medskip

Let $M\in\N$. The $\C$-vector spaces $S = S_k(\Gamma_1(M))$ and $S = S^b(\Gamma_1(M)): = \oplus_{i=1}^b S_i(\Gamma_1(M))$ have integral structures, so it is possible to define (arithmetically) the $A$-module of cusp forms in $S$ with coefficients in a ring $A$, which we denote by $S(A)$ (cf.\ Section~\ref{intstr}). The spaces $S(A)$ have actions by the Hecke operators $T_n$ for all $n \ge 1$. We point out that every $f \in S(\Zmod{p^m})$ can be obtained as the reduction of some $\tilde{f} \in S(\o_K)$ for some number field (or $p$-adic field)~$K$. However, for $m > 1$, if $f$ is an eigenform for the Hecke operators $T_n$ over $\Zmod{p^m}$ (for all $n\ge 1$ coprime to some fixed positive integer $D$), the lift $\tilde{f}$ cannot be chosen as an eigenform (for the same Hecke operators), in general.

We introduce the following three progressively weaker notions of eigenforms mod $p^m$.

\begin{dfn}\label{dfn:dc}
\begin{itemize} \item We say that $f \in S_k(\Gamma_1(M))(\Zmod{p^m})$ is a {\em strong Hecke eigenform (of level $M$ and weight $k$ over $\Zmod{p^m}$}) if there is an element $\tilde{f} \in S_k(\Gamma_1(M))(\Zbar_p)$ that reduces to $f$ and a positive integer $D$ such that
$$ T_n\tilde{f} = a_n(\tilde{f})\cdot \tilde{f} \textnormal{ and } a_1(\tilde{f})=1$$
for all $n\ge 1$ coprime with $D$, where $a_n(\tilde{f})$ is the $n$-th coefficient in the $q$-expansion of~$\tilde{f}$ at~$\infty$.
\item We say $f \in S_k(\Gamma_1(M))(\Zmod{p^m})$ is a {\em weak Hecke eigenform (of level $M$ and weight $k$ over $\Zmod{p^m}$}) if
$$ T_n f = f(T_n) \cdot f \textnormal{ and } f(T_1)=1$$
for all $n \ge 1$ coprime to some positive integer~$D$. The piece of notation $f(T_n)$ is defined in~Section~\ref{intstr} and represents the $n$-th coefficient of the formal $q$-expansion of~$f$.
\item  We say $f \in S^b(\Gamma_1(M))(\Zmod{p^m})$ is a {\em dc-weak Hecke eigenform (of level $M$ and in weights $\le b$ over $\Zmod{p^m}$)} if
$$ T_n f = f(T_n) \cdot f \textnormal{ and } f(T_1)=1$$
for all $n \ge 1$ coprime to some positive integer~$D$. Here, dc stands for `divided congruence' for reasons that will be explained in detail below.
\end{itemize}
\end{dfn}

We point out that above (and also in Definition \ref{dfn:eigenform}), we define eigenforms as `eigenforms away from finitely many primes' because this extra flexibility is very useful in our applications. We will specify this implicit parameter $D$ when necessary. In that case, we will speak of an {\em eigenform away from~$D$}.

There are natural maps
\begin{align*}
 & S_k(\Gamma_1(M))(\overline{\Z}_p) \rightarrow S_k(\Gamma_1(M))(\Zmod{p^m})), \\
 & S_k(\Gamma_1(M))(\Zmod{p^m})) \hookrightarrow  S^b(\Gamma_1(M))(\Zmod{p^m}), k \le b \\
 & S^b(\Gamma_1(M))(\Zmod{p^m}) \hookrightarrow \ilim_{c \ge 1} S^c(\Gamma_1(M))(\Zmod{p^m}).
\end{align*}
It follows that a strong eigenform of level $M$ and some weight is a weak eigenform of the same level and weight, and that a weak eigenform of level~$M$ and some weight gives rise to a dc-weak eigenform of the same level. Furthermore, if we regard our eigenforms inside the last direct limit, we can make sense of when two eigenforms (of the various kinds) are the same.

We derive in this article from Katz-Hida theory (\cite{katz}, \cite{hida1}, \cite{hida2}) that a dc-weak form of some level~$Np^r$ is dc-weak of level~$N$ (recall $p \nmid N$), under some mild technical restrictions. More precisely, we prove:

\begin{prop}\label{prop:hatada} Suppose that $p\ge 5$. Let $f \in S^b(\Gamma_1(Np^r))(\Zmod{p^m})$. Then there is $c \in \N$ and an element $g \in S^c(\Gamma_1(N))(\Zmod{p^m})$ such that
$$
f(q)= g(q) \in \Zmod{p^m}\,[[q]],
$$
where $f(q)$ and $g(q)$ denote the $q$-expansions of $f$ and $g$.
\end{prop}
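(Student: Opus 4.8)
The plan is to reinterpret $f$ as a $p$-adic cusp form of tame level $N$ in the sense of Katz, and then to apply the divided-congruence description of such forms modulo $p^m$ to realize it as a classical form of level $N$ in a bounded range of weights. I would proceed in three steps: lift $f$ to characteristic zero; strip the power of $p$ from the level at the level of $p$-adic modular forms, using the canonical subgroup on the ordinary locus; and descend the resulting mod $p^m$ form to a genuine divided-congruence form of level $N$. For the lifting step, recall from the discussion preceding Definition~\ref{dfn:dc} that every element of $S^b(\Gamma_1(Np^r))(\Zmod{p^m})$ is the reduction of some $\tilde f \in S^b(\Gamma_1(Np^r))(\o_K)$ for a suitable finite extension $K/\Q_p$; decomposing by weight, I may assume $\tilde f = \sum_{k \le b} \tilde f_k$ with $\tilde f_k \in S_k(\Gamma_1(Np^r))(\o_K)$.

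For the second step, which I expect to be the main obstacle, I would invoke Katz's construction (\cite{katz}) together with Hida's theory (\cite{hida1}, \cite{hida2}). Since $p \ge 5$, the ordinary locus carries a canonical subgroup, and the Igusa-tower trivialization of the formal group supplies the level-$p^r$ structure canonically; evaluating a classical form on these canonical data turns each $\tilde f_k$ of level $Np^r$ into a $p$-adic cusp form of tame level $N$ over $\o_K$ with the \emph{same} $q$-expansion (the canonical structures on the Tate curve being the standard ones). Summing over $k$ yields a single $p$-adic cusp form $F$ of tame level $N$ with $F(q) = \tilde f(q)$. The delicate point is the bookkeeping of the level structure at $p$: a $\Gamma_1(Np^r)$-structure is a point of exact order $p^r$, whereas the Igusa tower trivializes the connected (canonical) part $\cong \mu_{p^r}$, so one must pass through the Weil pairing and the \'etale quotient exactly as in Hida's comparison of the two kinds of $p$-power level structures, while also checking that cuspidality and the integral structure of Section~\ref{intstr} are preserved. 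Note that a wild nebentypus at $p$ is here absorbed into Katz's $p$-adic weight character; it does \emph{not} become trivial $p$-adically (indeed $\zeta_p \not\equiv 1 \pmod{p^m}$ for $m \ge 2$), which is precisely why the target space in the next step must consist of divided congruences rather than of forms of a single weight.

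For the third step, reduce $F$ modulo $p^m$. By the Katz--Hida description, the reduction mod $p^m$ of the space of $p$-adic cusp forms of tame level $N$ agrees, on $q$-expansions, with $\ilim_{c} S^c(\Gamma_1(N))(\Zmod{p^m})$; the congruence $E_{p-1}^{p^{m-1}} \equiv 1 \pmod{p^m}$ (valid for $p \ge 5$) is what confines the a priori infinitely many weights to a bounded window, so that this direct limit, rather than a $p$-adic completion, already exhausts all $p$-adic cusp forms mod $p^m$. In particular the wild weight-character of $F$ is represented mod $p^m$ by a finite $\Zmod{p^m}$-combination of classical level-$N$ forms, even though it is in general not representable by any single power $x \mapsto x^{k'}$. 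Thus $F \bmod p^m$ lies in $S^c(\Gamma_1(N))(\Zmod{p^m})$ for some $c$, and the corresponding $g$ satisfies $g(q) = F(q) \bmod p^m = f(q)$ in $\Zmod{p^m}[[q]]$, as required. The two places where I expect to spend real effort are the canonical-structure comparison of the second step and the verification that cuspidality and the $q$-expansion (integral) structures survive each reduction.
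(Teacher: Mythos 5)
Your proposal follows essentially the same route as the paper: lift $f$ to $\o_K$ (the paper's Lemma~\ref{lem:lifting}), strip the $p$-power from the level via Katz--Hida theory, and reduce mod $p^m$; the only real difference is that the paper takes the key input as a black box --- Hida's statement \cite[(1.3)]{hida1} that the $q$-expansion images of the completed divided-congruence spaces $\bar{\mathbb S}(\Gamma_1(N);\Z_p)$ and $\bar{\mathbb S}(\Gamma_1(Np^r);\Z_p)$ coincide --- whereas you sketch the proof of that input via the canonical subgroup and the Igusa tower. One slip to fix: you may not assume $\tilde f_k \in S_k(\Gamma_1(Np^r))(\o_K)$; by Lemma~\ref{lem:weights} the weight components of an integral lift lie only in $S_k(\Gamma_1(Np^r))(K)$, and their failure to be individually integral is precisely what ``divided congruence'' means. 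This is harmless --- the paper instead writes $\tilde f$ as an $\o_K$-linear combination of a $\Z$-basis of $S^b(\Gamma_1(Np^r))(\Z)$ and pushes each basis element through Hida's theorem --- but as written your second step would have to run weightwise over $K$ and then appeal to the $q$-expansion principle to see that the resulting sum $F$ is integral.
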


\medskip

As we show in Lemma~\ref{lem:ds}, when $m=1$, a dc-weak $f$ eigenform over $\Zmod{p^m}$ is in fact a strong eigenform. This means that we have a mod $p$ Galois representation $\rho_{f,p,1}$ attached to $f$. Using results of Carayol \cite{carayol}, we show more generally that it is possible to attach mod $p^m$ Galois representations to dc-weak eigenforms of level $M$ over $\Zmod{p^m}$, whenever the residual representation is absolutely irreducible. More general results involving infinite-dimensional completed Hecke algebras are due to Mazur, Gouv\^ea (see specifically Corollary III.5.8 of \cite{gouvea}), Hida, and Wiles, but we provide a self-contained proof based on \cite{carayol}.

\begin{thm}\label{main-galois}
Let $f$ be a dc-weak eigenform of level $M$ over $\Zmod{p^m}$. Assume that the residual representation $\rho_{f,p,1}$ is absolutely irreducible. Then there is a continuous Galois representation
$$ \rho_{f,p,m}: \Gal(\Qbar/\Q) \to \GL_2(\Zmod{p^m})
$$
unramified outside $M p$ such that for almost all primes $\ell \nmid M p$ we have
$$
\Tr(\rho_{f,p,m}(\Frob_\ell)) = f(T_\ell),
$$
where $f(T_\ell) \in \Zmod{p^m}$ is the eigenvalue of the operator~$T_\ell$
(see Section~\ref{sec:mf}).
\end{thm}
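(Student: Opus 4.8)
The plan is to realize the eigenvalue system of $f$ inside a localized Hecke algebra, transport to that algebra the Galois representations attached to genuine characteristic-zero eigenforms, invoke Carayol's theorem to pass from the resulting pseudo-representation to an honest representation, and finally specialize along the eigensystem of $f$ to obtain $\rho_{f,p,m}$.

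First I would fix a finite extension $K/\Q_p$ large enough that $f$ and all of its eigenvalues $f(T_n)$ lie in $R := \o_K/\p_K^{\gamma_K(m)}$; this is legitimate because $\Zmod{p^m}$ is the direct limit of such finite rings and the image of the (continuous) eigensystem is finite. Let $\T$ denote the $\o_K$-subalgebra of $\End_{\o_K}(S^b(\Gamma_1(M))(\o_K))$ generated by the Hecke operators $T_n$ with $n$ coprime to $D$. By the integral structure of Section~\ref{intstr}, $\T$ is finite and flat over $\o_K$, and since the good $T_n$ are simultaneously diagonalizable over $\Qbar_p$ the algebra $\T\otimes_{\o_K}\Qbar_p$ is a product of fields; hence $\T$ is reduced and embeds into a product $\prod_i \o_{K_i}$ of discrete valuation rings, one factor for each normalized characteristic-zero eigenform $f_i$ occurring in $S^b(\Gamma_1(M))$. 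The dc-weak eigenform $f$ then gives a ring homomorphism $\lambda : \T \to R$, $T_n \mapsto f(T_n)$, whose residual reduction $\bar\lambda : \T \to R \to \F$ has kernel a maximal ideal $\fm \subset \T$. By Lemma~\ref{lem:ds} the system $\bar\lambda$ is that of a genuine mod~$p$ eigenform, whose attached representation is exactly $\rho_{f,p,1}$, which we are assuming to be absolutely irreducible. As $\T$ is finite over the complete ring $\o_K$, the localization $\T_\fm$ is a complete local $\o_K$-algebra with residue field $\F$, and $\lambda$ factors through $\T_\fm$.

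Next I would build the pseudo-representation over $\T_\fm$. Each factor $f_i$ of $\T_\fm$ carries, by Shimura--Deligne--Serre, a continuous representation $\rho_i : \Gal(\Qbar/\Q) \to \GL_2(\o_{K_i})$, unramified outside $Mp$, with $\Tr\rho_i(\Frob_\ell)$ the image of $T_\ell$ and $\det\rho_i(\Frob_\ell)$ the image of $\ell^{k_i-1}\diam{\ell}$. Collecting these over the factors yields a trace function $\tau$ and a determinant function $\delta$ with values in $\prod_i \o_{K_i}$ whose values on each $\Frob_\ell$ are the images of $T_\ell$ and of $\ell^{k_i-1}\diam{\ell}$, hence lie in $\T_\fm$. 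Since each $\rho_i$ is an actual representation, the pair $(\tau,\delta)$ satisfies the identities of a two-dimensional pseudo-representation in $\prod_i \o_{K_i}$; and because $\T_\fm$ is closed in that product while the $\Frob_\ell$ are dense by Chebotarev, continuity forces $\tau$ and $\delta$ to take values in $\T_\fm$. Thus $(\tau,\delta)$ is a continuous pseudo-representation over $\T_\fm$ lifting the trace and determinant of the absolutely irreducible $\rho_{f,p,1}$.

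Finally I would apply Th\'eor\`eme~1 of \cite{carayol}: over the complete local ring $\T_\fm$, a pseudo-representation whose residual reduction is the trace of an absolutely irreducible representation is realized by an actual continuous representation $\rho_\fm : \Gal(\Qbar/\Q) \to \GL_2(\T_\fm)$, unique up to isomorphism, with $\Tr\rho_\fm = \tau$ and $\det\rho_\fm = \delta$. Composing with $\lambda : \T_\fm \to R \hookrightarrow \Zmod{p^m}$ gives $\rho_{f,p,m} := \lambda\circ\rho_\fm$, which is continuous (a composite of continuous maps into a finite ring), unramified outside $Mp$ (inherited from the $\rho_i$, hence from $\tau$ and $\delta$), and satisfies $\Tr\rho_{f,p,m}(\Frob_\ell) = \lambda(T_\ell) = f(T_\ell)$ for all $\ell \nmid Mp$, as required. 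The main obstacle I anticipate is not the construction of $\rho_\fm$ itself but the two bookkeeping points that make Carayol applicable: verifying that $\tau$ and $\delta$ genuinely land in $\T_\fm$ rather than merely in the ambient product $\prod_i\o_{K_i}$ (where reducedness of $\T$, closedness of $\T_\fm$, and Chebotarev density must be combined), and identifying the residual pseudo-representation with that of $\rho_{f,p,1}$, so that the absolute-irreducibility hypothesis legitimately licenses the passage from a pseudo-representation to an honest representation.
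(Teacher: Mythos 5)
Your construction is essentially the paper's: localize the prime-to-$D$ Hecke algebra at the maximal ideal determined by $f$, embed it in a product of rings of integers indexed by the characteristic-zero eigenforms, observe (via Chebotarev and closedness) that the traces of the product representation land in the local Hecke algebra, descend by Carayol, and specialize along the eigensystem $\lambda$. The one correction: the descent step is Carayol's Th\'eor\`eme~2 (descent of an honest representation from a semilocal extension of a henselian local ring with absolutely irreducible residual representations), not Th\'eor\`eme~1 (which is the uniqueness statement); since you already hold the actual representation $\prod_i\rho_i$ over $\prod_i\o_{K_i}$, you should apply Th\'eor\`eme~2 to it directly rather than pass to the pseudo-representation $(\tau,\delta)$, whose realization as a genuine representation is not what Th\'eor\`eme~1 provides and would otherwise need a Nyssen--Rouquier type result.
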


One can be more precise: if the normalized dc-weak eigenform in the theorem is an eigenform for all $T_n$ with $n$ coprime to the positive integer~$D$, then the final equality holds for all primes $\ell \nmid DMp$.

Having Galois representations attached to dc-weak eigenforms (and hence also to weak eigenforms), we can consider modularity questions. Towards this aim, we introduce further terminology corresponding to the above three notions of `eigenform mod $p^m$':

\begin{dfn}\label{dfn:dc-galois}
Given a Galois representation
$$
\rho_{p,m} : \Gal(\Qbar/\Q) \rightarrow \GL_2(\Zmod{p^m})
$$
which we assume to be residually absolutely irreducible, we say that $\rho_{p,m}$
\begin{itemize} \item {\it strongly arises from $\Gamma_1(M)$} if $\rho_{p,m}$ is isomorphic to $\rho_{f,p,m}$ for some strong Hecke eigenform $f$. As discussed above,
this is equivalent to
\begin{equation*}
  \Tr \rho_{p,m}(\Frob_\ell) = (a_\ell(\tilde{f}) \bmod p^m)
\end{equation*}
almost all primes $\ell \nmid M p$, where $\tilde{f}$ is a normalized eigenform over~$\Zbar_p$ lifting~$f$.
\item We say $\rho_{p,m}$ {\em weakly arises from $\Gamma_1(M)$} if $\rho_{p,m}$ is isomorphic to $\rho_{f,p,m}$ for some weak Hecke eigenform of level $M$ and some weight $k$ over $\Zmod{p^m}$.
\item We say that $\rho_{p,m}$ {\em dc-weakly arises from $\Gamma_1(M)$} if $\rho_{p,m}$ is isomorphic to $\rho_{f,p,m}$ for some dc-weak Hecke eigenform of level $M$ over $\Zmod{p^m}$.
\end{itemize}
\end{dfn}

One of the motivations behind the present work is building a framework for understanding such mod $p^m$ representations. We consider it conceptually very important to establish a precise link between mod $p^m$ Galois representations, especially those that arise as reductions of $p$-adic ones, and modular forms mod $p^m$. Such a link would have many important consequences of a theoretical and computational nature.

Results about mod $p$ Galois representations and their modularity have proven very useful in applications to Diophantine equations - such as in the proof of Fermat's Last Theorem, and many other families of generalized Fermat equations. In \cite{dahmen-yazdani}, a Ribet-type level lowering result for mod $p^m$ Galois representations is proven and applied to resolve some new cases of generalized Fermat equations.

In the present paper we treat the question of `stripping powers of $p$ away from the level':

\begin{question} If $\rho_{p,m}$ strongly arises from $\Gamma_1(Np^r)$, does it strongly arise from $\Gamma_1(N)$?
\end{question}

As is well-known, mod $p$ Galois representations $\rho_{f,p,1}$ always strongly arise from $\Gamma_1(N)$, cf.\ Ribet \cite[Theorem 2.1]{ribet} for $p\ge 3$, and Hatada \cite[Theorem 2]{hatada} for $p\ge 2$. However, when the nebentypus has a non-trivial component of $p$-power conductor and order, the representations $\rho_{f,p,m}$ do not in general even weakly arise from $\Gamma_1(N)$ if $m \ge 2$, as we show in Section \ref{relations}.

Let us remark that in the deduction that mod $p$ Galois representations in level $Np^r$ always strongly arise from level $N$ (rather than just weakly arise or dc-weakly arise) one uses two facts: the validity of the Deligne--Serre lifting lemma for mod $p$ representations, and the absence of constraints on the determinant of mod $p$ Galois representations arising from strong Hecke eigenforms. Neither of these facts are true in general for mod $p^m$ representations.

However, we derive from Katz-Hida theory, via Proposition~\ref{prop:hatada}, the following weaker version mod $p^m$ of the above level-lowering result.

\begin{thm}\label{thm:main} Let $f$ be a dc-weak eigenform of level $Np^r$ over $\Zmod{p^m}$. Assume that the residual representation $\rho_{f,p,1}$ is absolutely irreducible. Also assume $p \ge 5$.

Then the representation $\rho_{f,p,m}$ dc-weakly arises from $\Gamma_1(N)$.
\end{thm}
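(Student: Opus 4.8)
The plan is to combine the level-stripping at the level of $q$-expansions furnished by Proposition~\ref{prop:hatada} with the Galois representations of Theorem~\ref{main-galois}, and then to identify the two representations via Carayol's uniqueness statement. First I would apply Theorem~\ref{main-galois} to the given dc-weak eigenform $f$ of level $Np^r$: since $\rho_{f,p,1}$ is absolutely irreducible, this produces a continuous $\rho_{f,p,m}$, unramified outside $Np$, with $\Tr \rho_{f,p,m}(\Frob_\ell) = f(T_\ell)$ for almost all $\ell \nmid Np$. Next I would feed the $q$-expansion of $f$ into Proposition~\ref{prop:hatada} (legitimate since $p \ge 5$) to obtain $c \in \N$ and a form $g \in S^c(\Gamma_1(N))(\Zmod{p^m})$ with $g(q) = f(q)$ in $\Zmod{p^m}[[q]]$. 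In particular the $q$-expansion coefficients agree, so $g(T_n) = f(T_n)$ for every $n$.

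The central step, and the one I expect to be the main obstacle, is to show that $g$ is itself a dc-weak eigenform of level $N$, i.e.\ that $T_n g = g(T_n)\,g$ for all $n$ coprime to a suitable $D'$ (one may enlarge $D$ to include $p$). The correct eigenvalues are already forced by $g(T_n)=f(T_n)$; what must be verified is the operator identity. Here I would invoke the integral structure of Section~\ref{intstr}: the $q$-expansion map on $S^c(\Gamma_1(N))(\Zmod{p^m})$ is injective, and for $n$ coprime to $Np$ the Hecke operator $T_n$ acts on $q$-expansions compatibly in levels $N$ and $Np^r$ (the level-degeneracy map preserves $q$-expansions and is Hecke-equivariant away from $p$). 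Granting this, one computes $(T_n g)(q) = (T_n f)(q) = f(T_n)\,f(q) = g(T_n)\,g(q)$, and injectivity of the $q$-expansion map upgrades this equality of $q$-series to the operator identity $T_n g = g(T_n)\,g$. The delicate issue is precisely the well-definedness and level-independence of the induced Hecke action on $q$-expansions in the divided-congruence setting, where components of different weights are genuinely entangled mod $p^m$; this is where the Katz--Hida input underlying Proposition~\ref{prop:hatada} does the real work, and I would expect to lean on it (or on a Hecke-equivariant realization of the map $f \mapsto g$) rather than on a naive weight-by-weight argument.

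Finally, with $g$ established as a dc-weak eigenform of level $N$, I would observe that its residual representation has the same traces of Frobenius as $\rho_{f,p,1}$, so by Carayol's theorem applied mod $p$ it is isomorphic to $\rho_{f,p,1}$ and hence absolutely irreducible. Theorem~\ref{main-galois} then applies at level $N$ and yields $\rho_{g,p,m}$, unramified outside $Np$, with $\Tr \rho_{g,p,m}(\Frob_\ell) = g(T_\ell) = f(T_\ell) = \Tr \rho_{f,p,m}(\Frob_\ell)$ for almost all $\ell$. Since the residual representation is absolutely irreducible, Carayol's theorem together with the Chebotarev density theorem forces $\rho_{g,p,m} \cong \rho_{f,p,m}$. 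Thus $\rho_{f,p,m}$ is isomorphic to the representation attached to a dc-weak eigenform of level $N$, which is exactly the assertion that it dc-weakly arises from $\Gamma_1(N)$.
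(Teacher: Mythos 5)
Your proposal is correct and follows essentially the same route as the paper's own proof: apply Proposition~\ref{prop:hatada} to produce $g$ of level $N$ with the same $q$-expansion, enlarge $D$ so that $p \mid D$ to transfer the eigenform property to $g$, and then identify $\rho_{g,p,m}$ with $\rho_{f,p,m}$ via Theorem~\ref{main-galois} and Carayol's uniqueness. In fact you spell out the eigenform-transfer step (injectivity of the $q$-expansion map together with the level-independence of the Hecke action on $q$-expansions for $n$ coprime to $Np$) in more detail than the paper, which simply asserts that $g$ is again a normalized eigenform after enlarging $D$.
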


\begin{proof}
This follows by a combination of Proposition~\ref{prop:hatada} and Theorem~\ref{main-galois}: By definition the form $f$ is a normalized eigenform for all $T_n$ with $(n,D) = 1$. Pick a form $g$ at level $N$ according to Proposition~\ref{prop:hatada}. Enlarging $D$ if necessary so as to have $p\mid D$, we can be sure that $g$ is also a normalized eigenform for all $T_n$ with $(n,D) = 1$. As the Galois representation attached to $g$ by Theorem~\ref{main-galois} is isomorphic to $\rho_{f,p,m}$, the desired follows.
\end{proof}

We stress the following particular consequence of the theorem. If $f \in S_k(\Gamma_1(Np^r))$ is a normalized eigenform, then there exists a number field~$K$ and a $g \in S^b(N)(\o_K)$ (which cannot be taken to be an eigenform, in general) such that $g \pmod{p^m}$ is a dc-weak eigenform and its attached Galois representation $\rho_{g,p,m}$ is isomorphic to~$\rho_{f,p,m}$. We do not treat in this paper the more difficult question of weight information concerning such a $g$.

We note that a result of Hatada, \cite[Theorem 1]{hatada}, has a consequence that can be interpreted as the following statement, showing that the need for divided congruence forms only appears when the nebentypus has a non-trivial component of $p$-power conductor and order.

\begin{thm}[Hatada] \label{thm:hatada} Let $f$ be a strong eigenform of level $Np^r$ and weight $k$ over $\Zmod{p^m}$ such that $\diam{\ell} f = \chi(\ell) f$, where $\chi$ has no non-trivial component of $p$-power conductor and order. Then the representation $\rho_{f,p,m}$ weakly arises from $\Gamma_1(N)$.
\end{thm}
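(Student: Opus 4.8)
The plan is to reduce the assertion to a single congruence of $q$-expansions and then to appeal to the uniqueness of mod $p^m$ Galois representations. Throughout, as is implicit in Definition~\ref{dfn:dc-galois}, we work under the standing hypothesis that the residual representation $\rho_{f,p,1}$ is absolutely irreducible, so that $\rho_{f,p,m}$ is defined (Theorem~\ref{main-galois}) and the phrase ``weakly arises'' makes sense. By the Carayol--Chebotarev argument recalled after the property $(\ast)$, the representation $\rho_{f,p,m}$ is pinned down up to isomorphism by the values $\Tr\rho_{f,p,m}(\Frob_\ell) = (a_\ell(\tilde f)\bmod p^m)$ for almost all $\ell\nmid Np$, where $\tilde f \in S_k(\Gamma_1(Np^r))(\Zbar_p)$ is the normalized eigenform lifting $f$ (it exists since $f$ is strong). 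Hence it suffices to produce a weak Hecke eigenform $g$ of level $N$ in a \emph{single} weight over $\Zmod{p^m}$, eigen away from a suitable $D$ with $Np\mid D$, such that $g(T_\ell) = (a_\ell(\tilde f)\bmod p^m)$ for almost all $\ell\nmid Np$.

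To construct $g$, I would feed $\tilde f$ into Hatada's Theorem~1 of \cite{hatada}. The hypothesis that the nebentypus $\chi$ has no non-trivial component of $p$-power conductor and order is precisely the input that keeps the output in a single weight: writing $\chi = \chi_N\chi_p$ with $\chi_p$ of $p$-power conductor, the hypothesis forces $\chi_p$ to have order prime to $p$, so no wild twisting at $p$ intervenes and no genuine divided congruence (a sum spread across several weights) is forced upon us. Concretely, Hatada's result yields a cusp form $g \in S_{k'}(\Gamma_1(N))(\o_K)$ for some number field (or $p$-adic field) $K$ and some single weight $k'$, with
$$
g(q) \equiv \tilde f(q) \pmod{p^m}
$$
as elements of $\Zmod{p^m}[[q]]$.

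It then remains to check that $g\bmod p^m$ is a weak eigenform of level $N$ with the required eigenvalues, which is formal. Since the operators $T_n$ with $(n,Np)=1$ act on $q$-expansions by the same universal formulas at level $N$ and at level $Np^r$, and $\tilde f$ is a normalized eigenform, the congruence of $q$-expansions transports the eigenvalue relations: by injectivity of the $q$-expansion map (\qINJ) one obtains $T_n(g\bmod p^m) = (a_n(\tilde f)\bmod p^m)\,(g\bmod p^m)$ for all $n$ coprime to $D$ (after enlarging $D$ so that $Np\mid D$), together with $g(T_1)=1$. Thus $g\bmod p^m$ is a weak eigenform of level $N$ and weight $k'$ over $\Zmod{p^m}$, and $(g\bmod p^m)(T_\ell) = (a_\ell(\tilde f)\bmod p^m)$ for all $\ell\nmid Np$. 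Moreover $\rho_{g,p,1}\cong\rho_{f,p,1}$, since the two carry the same Hecke eigenvalues mod $p$ at almost all $\ell$ and the latter is absolutely irreducible; hence residual irreducibility is inherited, Theorem~\ref{main-galois} attaches $\rho_{g,p,m}$, and matching of traces forces $\rho_{g,p,m}\cong\rho_{f,p,m}$. This exhibits $\rho_{f,p,m}$ as weakly arising from $\Gamma_1(N)$.

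The main obstacle is the middle step: extracting from Hatada's Theorem~1 a level-$N$ form in one weight rather than a dc-weak (multi-weight) object, and verifying that the stated nebentypus condition is exactly what excludes the wild $p$-part responsible for divided congruences. Everything downstream---the transport of the eigenvalue relations through the $q$-expansion congruence, the inheritance of absolute irreducibility, and the final appeal to the Carayol uniqueness---is routine once this single-weight congruence is secured.
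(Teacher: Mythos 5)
The paper does not actually prove this statement: it is presented purely as an interpretation of \cite[Theorem 1]{hatada}, with no proof environment at all. So at the level of overall strategy your proposal matches the paper --- the substance is delegated to Hatada --- and your surrounding bookkeeping (Carayol--Chebotarev uniqueness, inheritance of residual irreducibility) is fine.

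However, there is a genuine gap in the step you label ``formal.'' It is not true that the operators $T_n$ with $(n,Np)=1$ act on $q$-expansions by the same universal formulas at level $N$ and at level $Np^r$: for $\ell\nmid Np$ one has $a_m(T_\ell h)=a_{m\ell}(h)+\ell^{k-1}\diam{\ell}$-contributions, i.e.\ the action depends on the weight and on the diamond/nebentypus action of the ambient space. Concretely, for $g\bmod p^m$ to be multiplicative on the level-$N$ Hecke algebra with eigenvalues $a_\ell(\tilde f)\bmod p^m$, the relation $T_{\ell^2}=T_\ell^2-\ell^{k'-1}\diam{\ell}$ at level $N$ forces $\ell^{k'-1}\chi'(\ell)\equiv\ell^{k-1}\chi(\ell)\pmod{p^m}$, and this does \emph{not} follow from the congruence of $q$-expansions $g(q)\equiv\tilde f(q)\pmod{p^m}$ alone. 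Indeed, Section~\ref{relations} of the paper is devoted to exactly this failure: when $\chi$ has a non-trivial component of $p$-power conductor and order, a level-$N$ form with the right $q$-expansion exists (dc-weakly), yet no weak eigenform of level $N$ can carry those traces, precisely because the determinants cannot match. So the nebentypus hypothesis is doing double duty: you correctly identify its role in keeping Hatada's output in a single weight, but you mislocate the second place it is needed, namely in making the transported functional an eigenform at level $N$ at all. To close the gap you must either extract from Hatada's Theorem~1 the stronger output that $g$ is (or reduces to) an eigenform whose weight and nebentypus satisfy $\ell^{k'-1}\chi'(\ell)\equiv\ell^{k-1}\chi(\ell)\pmod{p^m}$, or verify this determinant compatibility separately from the hypothesis on~$\chi$ (as in the computation with $\eta\equiv\epsilon^{k'-k}\pmod{p^m}$ in Section~\ref{relations}, which succeeds exactly when the $p$-power-order part $\eta$ is trivial).
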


The paper is organized as follows. In Section \ref{sec:mf} we provide background information on integral structures on spaces of modular forms, Hecke algebras, and divided congruence forms. In Section \ref{sec:galois} we construct Galois representations attached to dc-weak eigenforms. In Section \ref{sec:stripping} we give a proof of our level-lowering result in the mod $p^m$ setting. Finally, in Section \ref{relations} we make a number of observations on the relations between the three notions `strong', `weak', and `dc-weak' of eigenforms mod $p^m$.

\subsection*{Acknowlegdements}

The authors would like to thank the referee for his/her careful reading and the very good suggestions concerning presentation.

I.~K.\ acknowledges support from The Danish Council for Independent Research.

I.~C.\ acknowledges support from NSERC.

I.~C. and G.~W.\ would like to thank the University of Copenhagen, where part of this research was done, for its hospitality.

G.~W.\ acknowledges partial support by the DFG Priority Program 1489.

\section{Modular forms and Hecke algebras}\label{sec:mf} All material in this section is well-known. We present it here in a concise form.

\subsection{\texorpdfstring{$q$}{q}-expansions}

Let $\calS = \bigoplus_{k \in \N} S_k(\Gamma_1(M))$ be the $\C$-vector space of all cusp forms of any positive weight at a fixed level $M$. Let each Hecke operator $T_n$ act on $\calS$ via the diagonal action. We will be considering finite-dimensional subspaces $S \subseteq \calS$ of the following type:
$$
S = S^b(\Gamma_1(M)) := \bigoplus_{k=1}^b S_k(\Gamma_1(M))
$$
for any $b \in \N$, $M \ge 1$.
Such a subspace $S$ is stabilized by $T_n$ for all $n \ge 1$.

For $f \in \calS$, let $f(q) \in \C[[q]]$ denote its $q$-expansion. We denote the {\em $q$-expansion map} on $S \subset \calS$ by
$$
\Phi_{S} : S \to \C[[q]], f \mapsto f(q) = \sum_{n \ge 1} a_n(f) q^n.
$$

\begin{prop}\label{prop:inj}
Fix $M \in \N$ and $b\in \N$. Let $S := S^b(\Gamma_1(M))$. Then $\Phi_S$ is injective.
\end{prop}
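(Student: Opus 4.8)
The plan is to reduce the statement to the linear independence of cusp forms of distinct weights (at the fixed level $M$), and then to separate the weights by exploiting the weight-$k$ automorphy factor together with a Vandermonde determinant. First I would unwind the hypothesis $\Phi_S(f)=0$. Writing $f=\sum_{k=1}^b f_k$ with $f_k\in S_k(\Gamma_1(M))$, the condition $\Phi_S(f)=\sum_k f_k(q)=0$ in $\C[[q]]$ says that $\sum_k a_n(f_k)=0$ for every $n\ge 1$. Since each $f_k$ is holomorphic on the upper half-plane $\mathcal{H}$ with Fourier expansion $f_k(z)=\sum_{n\ge 1}a_n(f_k)e^{2\pi i n z}$, and since all these forms are $1$-periodic (as $\abcd{1}{1}{0}{1}\in\Gamma_1(M)$), this is exactly the statement that $g:=\sum_{k=1}^b f_k$ vanishes identically on $\mathcal{H}$. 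It therefore suffices to show that this forces each $f_k\equiv 0$.

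Next I would use the transformation law to extract the individual weights. Fix the parabolic element $\gamma:=\abcd{1}{0}{M}{1}\in\Gamma_1(M)$, so that $\gamma^j=\abcd{1}{0}{jM}{1}$ for $j\ge 1$. Because $f_k$ has weight $k$ for $\Gamma_1(M)$, we have $f_k(\gamma^j z)=(jMz+1)^k f_k(z)$. Evaluating the identity $g=0$ at the points $\gamma^j z$ then yields, for each fixed $z\in\mathcal{H}$ and every $j$, the linear relation $\sum_{k=1}^b (jMz+1)^k f_k(z)=0$.

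Setting $u_j:=jMz+1$, the values $u_1,\dots,u_{b+1}$ are pairwise distinct (since $Mz\ne 0$) and at most one of them can vanish, so at least $b$ of them are distinct and nonzero. The corresponding $b$ relations form a linear system in the unknowns $f_1(z),\dots,f_b(z)$ whose coefficient matrix $[u_j^{\,k}]_{k=1}^b$ has determinant $\bigl(\prod_j u_j\bigr)\prod_{j<j'}(u_{j'}-u_j)\ne 0$, a scaled Vandermonde determinant. Hence $f_k(z)=0$ for all $k$, and since $z\in\mathcal{H}$ was arbitrary, each $f_k\equiv 0$; therefore $f=0$ and $\Phi_S$ is injective.

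The passage from vanishing $q$-expansion to the vanishing of $g$ as a function is routine; the conceptual heart — and the only place needing a genuine idea — is the weight-separation step, where the automorphy factor $(cz+d)^k$ records the weight as its exponent and a single well-chosen element of $\Gamma_1(M)$, through its powers, supplies enough distinct evaluation points to invert a Vandermonde system. The main point to watch is that the chosen $\gamma$ actually lies in $\Gamma_1(M)$ and has lower-left entry $c\ne 0$, so that $u_j$ genuinely varies with $j$; the element $\abcd{1}{0}{M}{1}$ is engineered precisely to satisfy both requirements.
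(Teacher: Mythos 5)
Your proof is correct, and its first step is exactly the paper's: a vanishing $q$-expansion forces the holomorphic, $1$-periodic function $g=\sum_{k=1}^b f_k$ to vanish identically on the upper half-plane, so everything reduces to showing that cusp forms of distinct weights on $\Gamma_1(M)$ are linearly independent. Where you diverge is in how that weight-separation fact is handled: the paper simply cites Lemma 2.1.1 of Miyake, whereas you prove the needed special case from scratch by evaluating $g=0$ along the orbit of $z$ under powers of $\gamma=\abcd{1}{0}{M}{1}\in\Gamma_1(M)$ and inverting the resulting (scaled) Vandermonde system in the automorphy factors $(jMz+1)^k$. Your argument checks out: $\gamma$ does lie in $\Gamma_1(M)$ and has nonzero lower-left entry, the $u_j=jMz+1$ are pairwise distinct since $Mz\neq 0$, and in fact none of them can vanish because $\mathrm{Im}(u_j)=jM\,\mathrm{Im}(z)>0$, so your hedge that ``at most one of them can vanish'' is unnecessary (though harmless). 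The trade-off is the usual one: the citation buys the paper brevity and the full generality of Miyake's lemma (arbitrary Fuchsian groups, arbitrary weights), while your version makes the proposition self-contained at the cost of a few extra lines.
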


\begin{proof} Let $f_k \in S_k(\Gamma_1(M))$, for $k=1,\ldots, b$ be such that $\sum_{k=1}^b f_k(q) = 0$. The function $\sum_{k=1}^b f_k$ is holomorphic and $1$-periodic and hence uniquely determined by its Fourier series. Hence, $\sum_{k=1}^b f_k = 0$ and it then follows from \cite{miyake}, Lemma 2.1.1, that we have $f_k=0$ for each $k$.
\end{proof}

\subsection{Hecke algebras}

Let $R$ be a subring of~$\C$. Let
$$
\TT_R(S) \subseteq \End_\C(S)
$$
be the {\em $R$-Hecke algebra} associated to $S \subset \calS$, defined as the $R$-algebra generated by Hecke operators $T_n$, $n\ge 1$.

\begin{lem}\label{lem:pairing} Let $S := S^b(\Gamma_1(M))$.
\begin{enumerate}[(a)]
\item Let $f \in S$. Then $a_1(T_n f) = a_n(f)$ for all~$n \ge 1$.
\item Let $R \subseteq \C$ be a subring. Then the pairing of $R$-modules
$$
\TT_R(S) \times S \to \C, \;\;\; (T,f) \mapsto a_1(Tf)
$$
is non-degenerate.
\end{enumerate}
\end{lem}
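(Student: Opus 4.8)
The plan is to prove part (a) first by a direct computation with the classical formula for the action of $T_n$ on $q$-expansions, and then to deduce part (b) from (a) together with the injectivity of the $q$-expansion map established in Proposition~\ref{prop:inj}.

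For part (a): since $T_n$ acts diagonally on $\calS$ and hence preserves each graded piece, and since both $f \mapsto a_1(T_n f)$ and $f \mapsto a_n(f)$ are $\C$-linear, it suffices to treat $f \in S_k(\Gamma_1(M))$ for a single weight $k$. On such a space I would invoke the standard formula
$$
a_m(T_n f) = \sum_{d \mid \gcd(m,n)} d^{k-1}\, a_{mn/d^2}(\diam{d} f),
$$
valid for all $m, n \geq 1$, where $\diam{d}$ is the diamond operator (understood to vanish when $\gcd(d,M) \neq 1$). Specializing to $m = 1$ forces $d = 1$, so the sum collapses to $a_n(f)$. Summing over $k = 1, \dots, b$ then gives the identity on all of $S = S^b(\Gamma_1(M))$.

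For part (b) I would verify non-degeneracy on each side separately. On the $S$-side: suppose $f \in S$ satisfies $a_1(Tf) = 0$ for every $T \in \TT_R(S)$. Taking $T = T_n$ and using (a), we obtain $a_n(f) = a_1(T_n f) = 0$ for all $n \geq 1$, so $f(q) = 0$ and hence $f = 0$ by Proposition~\ref{prop:inj}. On the Hecke-algebra side: suppose $T \in \TT_R(S)$ satisfies $a_1(Tg) = 0$ for all $g \in S$. Fix $f \in S$; then by (a) and the commutativity of the Hecke operators (which gives $T T_n = T_n T$, since $T$ is a polynomial in the $T_n$),
$$
a_n(Tf) = a_1\bigl(T_n(Tf)\bigr) = a_1\bigl(T(T_n f)\bigr) = 0
$$
for every $n \geq 1$, the last equality applying the hypothesis to $g = T_n f \in S$. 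Thus $(Tf)(q) = 0$, so $Tf = 0$ by Proposition~\ref{prop:inj}; as $f$ was arbitrary, $T = 0$ in $\End_\C(S)$.

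I do not expect a serious obstacle: the whole argument rests on the single weight-by-weight coefficient formula in (a) together with the already-available injectivity of $\Phi_S$. The only points requiring care are bookkeeping ones: testing the pairing with $T_n \in \TT_R(S)$ is legitimate because the $T_n$ generate the algebra over any subring $R$; the space $S$ is stable under all operators involved, so $Tf$ and $T_n f$ again lie in $S$; and one must invoke commutativity of the $T_n$ to move $T$ past $T_n$. The fact that the pairing takes values in $\C$ rather than in $R$ causes no difficulty, since both injectivity statements concern $R$-module maps into $\C$.
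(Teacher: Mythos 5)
Your proof is correct and follows essentially the same route as the paper: reduce (a) to a single weight via the diagonal action (you additionally write out the classical coefficient formula, which the paper leaves implicit), and deduce (b) from (a) together with the injectivity of the $q$-expansion map, testing against the generators $T_n$ and using commutativity to get $a_1(T(T_nf))=a_n(Tf)$.
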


\begin{proof} (a) follows, since the equation $a_1(T_n f) = a_n(f)$ is true on every summand of~$S$, hence also in the sum.

(b) Let first $f \in S$ be given. If $a_1(Tf) = 0$ for all $T \in \TT_R(S)$, then, in particular, $a_1(T_n f) = a_n(f) = 0$ for all~$n$, whence~$f$ is zero by the injectivity of the $q$-expansion map. Let now $T \in \TT_R(S)$ be given. If $a_1(Tf) = 0$ for all $f \in S$, then, in particular, $0=a_1(T (T_n f))=a_n(Tf)$ for all $f$ and all~$n$. Thus, as before we conclude that $Tf$ is zero for all~$f$, which by definition means $T = 0$.
\end{proof}

\subsection{Integral structures and cusp forms with coefficients in rings}\label{intstr} As is well-known, the spaces $S_k(\Gamma_1(M))$ have integral structures in the sense that $S_k(\Gamma_1(M))$ contains a full lattice which is stable under the Hecke operators $T_n$ for all $n \ge 1$ (cf.\ for instance \cite[Proposition 2.7]{deligne-serre}). It follows the space $S = S^b(\Gamma_1(M)) := \oplus_{k=1}^b S_k(\Gamma_1(M))$ also contains a full lattice stable under the Hecke operators $T_n$ for all $n \ge 1$.

Thus, $\TT_\Z(S)$ sits inside an integer matrix ring, and  $\TT_\C(S)$ sits inside the corresponding complex matrix ring. This implies that $\TT_\Z(S)$ is free and finite over $\Z$. Furthermore, the natural homomorphism $\TT_\Z(S) \otimes \C \rightarrow \TT_\C(S)$ is injective, so as $T_n \otimes 1$ is sent to $T_n$ and these generate $\TT_\C(S)$ over $\C$, this is an isomorphism
$$
\TT_\Z(S) \otimes \C \cong \TT_\C(S).
$$

Hence we also see that the map
$$
\alpha: \Hom_\C (\TT_\C(S) ,\C ) \longrightarrow \Hom_\C (\TT_\Z(S) \otimes \C ,\C) \longrightarrow \Hom_\Z (\TT_\Z(S),\C)
$$
coming from $\TT_\Z(S) \rightarrow \TT_\Z(S) \otimes \C \rightarrow \TT_\C(S)$ is an isomorphism (the last arrow is always an isomorphism).

Now, $S \cong \Hom_\C (\TT_\C(S) ,\C )$ as we have a non-degenerate pairing between these two complex vector spaces, cf.\ Lemma \ref{lem:pairing}. Explicitly, we obtain an isomorphism
$$
\beta: \Hom_\C (\TT_\C(S),\C) \longrightarrow S
$$
by mapping $\phi\in \Hom_\C (\TT_\C(S),\C)$ to $\sum_n \phi(T_n) q^n$ ($q=e^{2\pi i z}$). By the above isomorphisms, it follows that the map
$$
\Psi_S := \beta \circ \alpha^{-1} : ~ \Hom_\Z (\TT_\Z(S),\C) \longrightarrow S,
$$
which satisfies
$$
\Psi_S(\phi) = \sum_n \phi(T_n) q^n,
$$
is well defined and is an isomorphism.

\begin{dfn}
Let $A$ be any commutative ring~$A$. We let
$$
S(A) := \Hom_\Z(\TT_\Z(S),A) \;\;\; \textnormal{($\Z$-linear homomorphisms)},
$$

which we call the {\em cusp forms in $S$ with coefficients in~$A$}.
\end{dfn}
The $A$-module $S(A)$ is equipped with a natural action of $\TT_\Z(S)$ given by
$$(T.f)(T') = f(T T').$$
Note that $S(\C) \cong S$.
We remark that for any ring~$A$ and any $1 \le k \le b$, the map
$$
S_k(\Gamma_1(M))(A) \to S^b(\Gamma_1(M))(A), \;\;\; f \mapsto f \circ \pi,
$$
is an injective $A$-module homomorphism, where $\pi$ is the surjective ring homomorphism
$$
\TT_\Z(S^b(\Gamma_1(M))) \to \TT_\Z(S_k(\Gamma_1(M)))
$$
defined by restricting Hecke operators.

We mention that for $k \ge 2$ and $M \ge 5$ invertible in~$A$,
$S_k(\Gamma_1(M))(A)$ coincides with the corresponding $A$-module of Katz modular forms
(see e.g.\ \cite{diamond-im}, Theorem~12.3.2).

\begin{dfn}\label{dfn:eigenform}
We say that a cusp form $f \in S(A)$ is a {\em normalized Hecke eigenform} if there is a positive integer $D$ such that
$$ T_n f = f(T_n) \cdot f \textnormal{ and } f(T_1)=1$$
for all integers $n$ coprime to~$D$.
\end{dfn}

The above notion of normalized Hecke eigenform is consistent with Definition~\ref{dfn:dc}. As before, if the parameter~$D$ is specified, then we will speak of an {\em eigenform away from~$D$}.

The chosen isomorphism $\C \cong \Qbar_p$ identifies $S \cong S(\C)$ with $S(\Qbar_p)$. Hence, via the isomorphism $S(\C) \cong S$ normalized holomorphic eigenforms in $S$ (in the usual sense, for almost all Hecke operators) are precisely the normalized eigenforms in $S(\Qbar_p)$. In the following we will identify forms in $S$ with elements in either of the two spaces $S(\C)$ and $S(\Qbar_p)$.

Let $R \subseteq \C$ be a subring.
For a positive integer~$D$, let $\TT^{(D)}_R(S)$ be the $R$-subalgebra of $\TT_R(S)$ generated by those Hecke operators $T_n$ for which $n$ and $D$ are coprime.

\begin{lem}\label{lem:eigenforms} Let $A$ be a ring, $f \in S(A)$ a normalized Hecke eigenform and $D$ as in Definition~\ref{dfn:eigenform}.

Then the restriction of $f$ to $\TT_\Z^{(D)}$ is a ring homomorphism.
\end{lem}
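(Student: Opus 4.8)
The plan is to unwind the definition of a normalized Hecke eigenform into a multiplicativity statement against the Hecke operators $T_n$ that generate $\TT_\Z^{(D)}$, and then to bootstrap this to full multiplicativity by a short induction. Throughout I use that $f \in S(A) = \Hom_\Z(\TT_\Z(S),A)$ is by construction $\Z$-linear, that the $\TT_\Z(S)$-action on $S(A)$ is given by $(T.f)(T') = f(TT')$, and that $\TT_\Z^{(D)}$ is by definition the $\Z$-subalgebra of $\TT_\Z(S)$ generated by the operators $T_n$ with $(n,D) = 1$.

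First I would record the two inputs coming from the eigenform hypothesis. Since $T_1$ is the identity operator, the normalization $f(T_1) = 1$ says exactly that $f$ sends the unit of $\TT_\Z^{(D)}$ to $1 \in A$. Next, for any $n$ coprime to $D$, the relation $T_n f = f(T_n)\cdot f$ is an identity in $S(A)$; evaluating both sides at an arbitrary $T' \in \TT_\Z(S)$ and using the definition of the action together with scalar multiplication in $S(A)$ gives
$$
f(T_n T') = f(T_n)\, f(T')
$$
for all $n$ with $(n,D) = 1$ and all $T' \in \TT_\Z(S)$. I would emphasize that this already holds against every $T'$ in the full algebra, not merely in $\TT_\Z^{(D)}$, which is what makes the induction below run smoothly.

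It then remains to promote this multiplicativity against generators to the statement $f(ST) = f(S)\,f(T)$ for all $S,T \in \TT_\Z^{(D)}$. By $\Z$-linearity of $f$ it suffices to treat the case where $S$ is a monomial $T_{n_1}\cdots T_{n_r}$ in the generators (each $(n_i,D)=1$), and I would induct on $r$. The case $r=0$ is the normalization $f(1)=1$; the case $r=1$ is the displayed relation; and in the inductive step one writes $S = T_{n_1} S'$ with $S' = T_{n_2}\cdots T_{n_r}$ and applies the displayed relation with $T'$ replaced first by $S'T$ and then by $S'$, invoking the inductive hypothesis on the shorter word $S'$. This yields $f(ST) = f(T_{n_1})\,f(S')\,f(T) = f(S)\,f(T)$, completing the induction, and hence the proof.

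The argument is essentially bookkeeping once the eigenform relation has been unwound, so there is no deep obstacle here. The only points requiring a little care --- and the closest thing to a difficulty --- are the reduction to monomials and the observation that the generating relation is available against \emph{all} of $\TT_\Z(S)$, so that intermediate products such as $S'T$ can be fed back into it; both are immediate from the definitions of the action and of $\TT_\Z^{(D)}$.
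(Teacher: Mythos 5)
Your proof is correct and follows essentially the same route as the paper's: the paper's (one-line) argument is precisely the identity $f(TT') = (T.f)(T') = f(T)f(T')$ obtained by unwinding the action, with the extension from the generators $T_n$ to all of $\TT_\Z^{(D)}(S)$ left implicit. Your explicit reduction to monomials and induction on word length just spells out that implicit step; there is no substantive difference.
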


\begin{proof}
The claim immediately follows from the equation
$$
f (TT') = (T.f)(T') = f(T) f(T')
$$
with $T,T' \in \TT_\Z^{(D)}(S)$.
\end{proof}

\subsection{Integral structures for Hecke algebras, base change and lifting}
Using the integral structure on~$S$ we can also equip Hecke algebras with an integral structure in the following sense.

\begin{lem}\label{lem:intstr} Fix $M,b \in \N$ and let $S := \bigoplus_{k=1}^b S_k(\Gamma_1(M))$. Let $R \subseteq \C$ be a subring.
\begin{enumerate}[(a)]
\item The $R$-Hecke algebra $\TT_R(S)$ is free as an $R$-module of rank equal to $\dim_\C S$, in particular, $\TT_\Z(S)$ is a free $\Z$-module of that rank.
\item $\TT_R(S) \cong \TT_\Z(S) \otimes_\Z R$.
\end{enumerate}
\end{lem}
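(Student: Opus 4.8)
The plan is to reduce everything to the facts already assembled for $R=\Z$ and $R=\C$ in Subsection~\ref{intstr} together with Lemma~\ref{lem:pairing}. Recall from there that $\TT_\Z(S)$ is free of finite rank over $\Z$ and that the natural map yields an isomorphism $\TT_\Z(S)\otimes_\Z\C \cong \TT_\C(S)$, $T\otimes z \mapsto zT$.

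First I would pin down the rank. Since $\TT_\Z(S)$ is free over $\Z$, its rank equals $\dim_\C(\TT_\Z(S)\otimes_\Z\C) = \dim_\C \TT_\C(S)$. By the non-degeneracy of the pairing $\TT_\C(S)\times S \to \C$ of Lemma~\ref{lem:pairing}(b) (equivalently, by the isomorphism $S \cong \Hom_\C(\TT_\C(S),\C)$ noted in Subsection~\ref{intstr}), these two finite-dimensional $\C$-vector spaces have the same dimension, so $\dim_\C \TT_\C(S) = \dim_\C S$. Hence $\operatorname{rank}_\Z \TT_\Z(S) = \dim_\C S$, which establishes part (a) in the case $R=\Z$.

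Next I would prove (b), from which (a) for general $R$ follows immediately: once $\TT_R(S)\cong \TT_\Z(S)\otimes_\Z R$ is known, the right-hand side is free over $R$ of rank $\operatorname{rank}_\Z \TT_\Z(S) = \dim_\C S$. Consider the natural $R$-algebra homomorphism $\phi\colon \TT_\Z(S)\otimes_\Z R \to \TT_R(S)$, $T\otimes r \mapsto rT$. It is surjective because, by definition, $\TT_R(S)$ is generated over $R$ by the operators $T_n$, all of which lie in the image. For injectivity I would compose with the inclusion $\TT_R(S)\hookrightarrow \TT_\C(S)$ and observe that the resulting map $\TT_\Z(S)\otimes_\Z R \to \TT_\C(S)$, $T\otimes r \mapsto rT$, coincides with the map $\TT_\Z(S)\otimes_\Z R \to \TT_\Z(S)\otimes_\Z \C \cong \TT_\C(S)$ obtained by applying the functor $\TT_\Z(S)\otimes_\Z(-)$ to the inclusion $R\hookrightarrow \C$. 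Because $\TT_\Z(S)$ is free, hence flat, over $\Z$, this last map is injective; therefore $\phi$ is injective as well, and so is an isomorphism.

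The argument is essentially formal once the two inputs for $R=\Z$ and $R=\C$ are in place, so I do not expect a serious obstacle. The only point requiring some care is the injectivity of $\phi$: one must not argue naively, since a subring inclusion $R\hookrightarrow\C$ need not remain injective after an arbitrary base change. Instead I would use the flatness of the free $\Z$-module $\TT_\Z(S)$ together with the already-established identification $\TT_\Z(S)\otimes_\Z\C\cong\TT_\C(S)$ to route $\phi$ through $\TT_\C(S)$, where injectivity becomes manifest.
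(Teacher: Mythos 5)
Your proposal is correct and follows essentially the same route as the paper: the rank is computed from $\TT_\Z(S)\otimes_\Z\C\cong\TT_\C(S)$ together with the non-degenerate pairing, and (b) is obtained by showing the natural surjection $\TT_\Z(S)\otimes_\Z R\to\TT_R(S)$ is injective. The only cosmetic difference is that the paper phrases the injectivity by saying a $\Z$-basis of $\TT_\Z(S)$ remains linearly independent over $\C$ and hence over $R$, whereas you invoke flatness of the free module $\TT_\Z(S)$ to route the map through $\TT_\C(S)$; these are two formulations of the same fact.
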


\begin{proof} We know that the natural map $\TT_\Z(S)\otimes\C \rightarrow \TT_\C(S)$ is an isomorphism and that $\TT_\Z(S)$ is free and finite over $\Z$. It follows that $\TT_\Z(S)$ is a lattice of full rank in $\TT_\C(S)$. That rank is the $\C$-dimension of $\TT_\C(S)$, i.e., of $\Hom_\C (\TT_\C(S) , \C) \cong S$. Hence we have (a) for $R=\Z$.

It follows immediately that $\TT_\Z(S) \otimes_\Z R$ is a free $R$-module of the same rank, which surjects onto $\TT_R(S)$. Now, $\TT_\Z(S)$ has a $\Z$-basis which remains linearly independent over~$\C$. Thus, it also remains linearly independent over~$R$, and its $R$-span is by definition $\TT_R(S)$, which is thus a free $R$-module of the same rank and is isomorphic to $\TT_\Z(S)\otimes_\Z R$.
\end{proof}

We further obtain that cusp forms in $S$ with coefficients behave well with respect to arbitrary base change:

\begin{lem}\label{lem:basechange} Fix $M,b \in \N$ and let $S := \bigoplus_{k=1}^b S_k(\Gamma_1(M))$. Let $A \to B$ be a ring homomorphism. Then $S(A) \otimes_A B \cong S(B)$.
\end{lem}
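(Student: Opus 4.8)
The plan is to unwind the definition $S(A)=\Hom_\Z(\TT_\Z(S),A)$ and to exploit the finite freeness of the Hecke algebra established in Lemma~\ref{lem:intstr}(a). Write $T:=\TT_\Z(S)$, which by that lemma is a free $\Z$-module of finite rank $d=\dim_\C S$. First I would record that for any ring $A$ the abelian group $\Hom_\Z(T,A)$ carries a natural $A$-module structure via $(a\cdot\phi)(t)=a\,\phi(t)$ — this is well defined because multiplication by $a$ is a $\Z$-linear endomorphism of $A$ — so that both $S(A)\otimes_A B$ and $S(B)$ are genuine $B$-modules.

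Next I would write down the candidate isomorphism. Let $\iota:A\to B$ denote the given ring homomorphism, and define
$$
\theta:\Hom_\Z(T,A)\otimes_A B \longrightarrow \Hom_\Z(T,B),\qquad \theta(\phi\otimes b)(t)=b\cdot\iota(\phi(t)).
$$
One checks readily that $\theta$ is well defined, since the right-hand side is $A$-balanced in the pair $(\phi,b)$, and that it is $B$-linear. It is moreover compatible with the $\TT_\Z(S)$-actions, which on both sides are induced by the multiplication of $T$ on itself (namely $(T''.f)(T')=f(T''T')$) and hence do not involve the coefficient ring; so $\theta$ will automatically be an isomorphism of $\TT_\Z(S)$-modules once it is shown to be bijective.

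The verification of bijectivity is where the freeness of $T$ does all the work, and it is essentially the only nontrivial input. Choosing a $\Z$-basis $e_1,\dots,e_d$ of $T$ gives, for every ring $R$, an $R$-linear isomorphism $\mathrm{ev}_R:\Hom_\Z(T,R)\xrightarrow{\sim}R^d$, $\phi\mapsto(\phi(e_i))_i$. Under $\mathrm{ev}_A$ and $\mathrm{ev}_B$ the map $\theta$ is identified with the canonical base-change isomorphism $A^d\otimes_A B\xrightarrow{\sim}B^d$, using $A^d\otimes_A B\cong(A\otimes_A B)^d\cong B^d$. Hence $\theta$ is bijective, which yields $S(A)\otimes_A B\cong S(B)$ as desired.

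I do not anticipate a genuine obstacle here: once Lemma~\ref{lem:intstr}(a) guarantees that $\TT_\Z(S)$ is free of finite rank, the statement reduces to the elementary fact that $\Hom$ out of a finitely generated free module commutes with arbitrary base change. The only points requiring a line of care are the well-definedness and $B$-linearity of $\theta$ (its independence of the chosen basis being automatic, as $\theta$ was defined without reference to a basis) and, if one wants the sharper statement, the compatibility with the Hecke action noted above.
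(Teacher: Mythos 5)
Your proof is correct and follows essentially the same route as the paper: both reduce the statement, via Lemma~\ref{lem:intstr}, to the fact that $\Hom_\Z(\Z^d,-)$ commutes with base change along $A\to B$. You are somewhat more explicit in writing down the natural map $\theta$ and noting its Hecke-equivariance, but the underlying argument is identical.
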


\begin{proof}
By Lemma~\ref{lem:intstr} we know that $\TT_\Z(S)$ is a free $\Z$-module of some finite rank~$d$. Hence:
$S(A) \otimes_A B
= \Hom_\Z(\TT_\Z(S),A) \otimes_A B
\cong \Hom_\Z(\Z^d,A) \otimes_A B
\cong A^d \otimes_A B
\cong B^d
\cong \Hom_\Z(\Z^d,B)
\cong \Hom_\Z(\TT_\Z(S),B)
= S(B)$.
\end{proof}

For the sake of completeness we also record the following simple lifting property.

\begin{lem}\label{lem:lifting} Fix $M,b \in \N$ and let $S := \bigoplus_{k=1}^b S_k(\Gamma_1(M))$.

Let $f \in S(\Zmod{p^m})$. Then there is a number field (and hence there is also a $p$-adic field)~$K$ and $\tilde{f} \in S(\o_K)$ such that $\tilde{f} \equiv f \pmod{p^m}$, in the sense that $\tilde{f}(T_n) \equiv f(T_n) \pmod{p^m}$ for all $n \in \N$.
\end{lem}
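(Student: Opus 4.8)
The plan is to use the structural description $S(A) = \Hom_\Z(\TT_\Z(S), A)$ together with the fact that $\TT_\Z(S)$ is a free $\Z$-module of finite rank, established in Lemma~\ref{lem:intstr}. Fix a $\Z$-basis $T_{n_1}, \dots, T_{n_d}$ of $\TT_\Z(S)$ (which we may take to be among the $T_n$). Then $f \in S(\Zmod{p^m})$ is determined by the finite tuple of values $(f(T_{n_1}), \dots, f(T_{n_d})) \in (\Zmod{p^m})^d$, and conversely any such tuple defines an element of $S(\Zmod{p^m})$.

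The first step is to observe that each coordinate $f(T_{n_j}) \in \Zmod{p^m}$ lies, by the very definition $\Zmod{p^m} = \ilim_K \o_K/\p_K^{\gamma_K(m)}$, in the image of $\o_{K_j}/\p_{K_j}^{\gamma_{K_j}(m)}$ for some finite extension $K_j/\Q_p$. Taking a single field $K$ containing all finitely many $K_j$ (the compositum), I can choose, for each $j$, an element $\tilde{a}_j \in \o_K$ whose image in $\o_K/\p_K^{\gamma_K(m)} \hookrightarrow \Zmod{p^m}$ equals $f(T_{n_j})$. Since $K$ may be taken to be a number field (using the fixed embedding $\Qbar \hookrightarrow \Qbar_p$ and density, or simply because $\Zmod{p^m}$ is built as a limit over number fields as well), the lift will live over a number field as claimed.

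The second step is to assemble these lifted values into an element of $S(\o_K) = \Hom_\Z(\TT_\Z(S), \o_K)$. Because $\TT_\Z(S)$ is \emph{free} on the chosen basis, there is a unique $\Z$-linear homomorphism $\tilde{f} : \TT_\Z(S) \to \o_K$ sending $T_{n_j} \mapsto \tilde{a}_j$ for each $j$; no relations need to be checked, which is precisely why freeness is the crucial input. By construction the reduction map $\o_K \to \o_K/\p_K^{\gamma_K(m)} \hookrightarrow \Zmod{p^m}$ carries $\tilde{f}$ to a homomorphism $\TT_\Z(S) \to \Zmod{p^m}$ agreeing with $f$ on the basis, hence equal to $f$ as elements of $\Hom_\Z(\TT_\Z(S), \Zmod{p^m}) = S(\Zmod{p^m})$. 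This gives $\tilde{f}(T_n) \equiv f(T_n) \pmod{p^m}$ for the basis elements; the congruence then extends to \emph{all} $n$ by $\Z$-linearity, since every $T_n$ is a $\Z$-linear combination of the basis elements and reduction is a ring (in particular $\Z$-module) homomorphism compatible with the $\TT_\Z(S)$-action.

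There is no serious obstacle here: the statement is essentially a lifting-of-coefficients fact, and the only thing that could go wrong—namely, the existence of relations among the Hecke operators that a naive coordinate-wise lift might violate—is ruled out by the freeness of $\TT_\Z(S)$ over $\Z$. The mild point requiring care is the passage between $p$-adic and number-field lifts and the compatibility of the exponent $\gamma_K(m)$ under the compositum, but this is exactly what the injections $\Z/p^m\Z \hookrightarrow \o_K/\p_K^{\gamma_K(m)} \hookrightarrow \o_L/\fP_L^{\gamma_L(m)}$ recorded in the introduction guarantee, so enlarging $K$ causes no loss.
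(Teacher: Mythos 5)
Your argument is correct and essentially the same as the paper's: the paper notes that the image of $f$ lands in a single $\o_K/\p_K^{\gamma_K(m)}$ and then lifts $f$ through the surjection $\o_K \twoheadrightarrow \o_K/\p_K^{\gamma_K(m)}$ using projectivity of the free $\Z$-module $\TT_\Z(S)$, which is precisely your explicit basis-by-basis lift. (Your parenthetical claim that the $\Z$-basis may be chosen among the $T_n$ is not needed for the argument and would require separate justification, so it is better omitted.)
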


\begin{proof}
As $\TT_\Z(S)$ is a free $\Z$-module of finite rank (Lemma~\ref{lem:intstr}), it is a projective $\Z$-module. Moreover, the image of the homomorphism (of abelian groups) $f: \TT_\Z(S) \to \Zmod{p^m}$ lies in $\o_K/\p_K^{\gamma_K(m)}$ for some number field (or, $p$-adic field)~$K$.
The projectivity implies by definition that $f$ lifts to a homomorphism $\tilde{f}: \TT_\Z(S) \to \o_K$.
\end{proof}

We stress again that eigenforms mod~$p^m$ cannot, in general, be lifted to eigenforms if $m > 1$, but see Lemma~\ref{lem:ds}.

\subsection{Divided congruences}

In the next lemma we will show that when the coefficients are over a $\Q$-algebra $K$ one can split $S(K)$ into a direct sum according to weights. This does not hold true, in general, for arbitrary rings and leads to divided congruences.

\begin{lem}\label{lem:weights} Fix $M,b \in \N$ and let $S := \bigoplus_{k=1}^b S_k(\Gamma_1(M))$. Put $S_k := S_k(\Gamma_1(M))$ for each $k$.

If $K$ is any $\Q$-algebra, then one has $S(K) = \bigoplus_{k=1}^b S_k(K)$. Moreover, if $K$ is a field extension of~$\Q$ and $f \in S(K)$ is a normalized eigenform (say, it is an eigenform away from~$D$), then there is $k$ and a normalized eigenform $\tilde{f} \in S_k(L)$ for some finite extension $L/K$ such that $f(T_n) = \tilde{f}(T_n)$ for all $n$ coprime with~$D$.
\end{lem}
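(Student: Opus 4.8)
The plan is to prove the two assertions separately, the decomposition first and the statement about eigenforms second, the latter building on the former. For the decomposition, for every ring $A$ and every $k$ the restriction of Hecke operators $\pi_k : \TT_\Z(S) \to \TT_\Z(S_k)$ induces the map $S_k(A) \to S(A)$, $g \mapsto g \circ \pi_k$, already recorded in Section~\ref{intstr}, and I would assemble these into a single homomorphism $\Theta_A : \bigoplus_{k=1}^b S_k(A) \to S(A)$. Since all the integral Hecke algebras are free over $\Z$ by Lemma~\ref{lem:intstr}, Lemma~\ref{lem:basechange} gives $S_k(A) = S_k(\Z) \otimes_\Z A$ and $S(A) = S(\Z) \otimes_\Z A$, and under these identifications $\Theta_A = \Theta_\Z \otimes_\Z \mathrm{id}_A$; in particular $\Theta$ commutes with base change. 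Over $\C$ the $q$-expansion isomorphism $\Psi_S : S(\C) \xrightarrow{\sim} S$ of Section~\ref{intstr} identifies $\Theta_\C$ with the tautological isomorphism $\bigoplus_k S_k \xrightarrow{\sim} S$ coming from the very definition $S = \bigoplus_k S_k$ (using injectivity of the $q$-expansion, Proposition~\ref{prop:inj}), so $\Theta_\C$ is an isomorphism.

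Since $\C$ is faithfully flat over $\Q$ and $\Theta_\Q \otimes_\Q \C = \Theta_\C$, I conclude that $\Theta_\Q$ is an isomorphism of finite-dimensional $\Q$-vector spaces; then for an arbitrary $\Q$-algebra $K$, flatness of $K$ over $\Q$ turns $\Theta_K = \Theta_\Q \otimes_\Q \mathrm{id}_K$ into an isomorphism, which is exactly $S(K) = \bigoplus_{k=1}^b S_k(K)$. For the second assertion I would write $f = \sum_{k=1}^b f_k$ with $f_k \in S_k(K)$ using this decomposition. The embeddings $S_k(K) \hookrightarrow S(K)$ are equivariant for the operators $T_n$ because $\pi_k$ is a ring homomorphism, so each $T_n$ preserves the summands; applying $T_n$ with $(n,D)=1$ to the eigenform relation $T_n f = f(T_n) f$ and comparing components in the direct sum yields $T_n f_k = f(T_n)\, f_k$ for every $k$. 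Thus each nonzero $f_k$ is a simultaneous eigenvector for all $T_n$ with $(n,D)=1$, with eigenvalue $f(T_n)$.

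To finish, I note that $\sum_k a_1(f_k) = a_1(f) = f(T_1) = 1$, so $a_1(f_{k_0}) \neq 0$ for some index $k_0$, and I set $\tilde f := a_1(f_{k_0})^{-1} f_{k_0} \in S_{k_0}(K)$, which is legitimate since $K$ is a field. From $a_n(g) = a_1(T_n g)$ (Lemma~\ref{lem:pairing}(a)) applied to $g = f_{k_0}$ I get $a_n(f_{k_0}) = f(T_n)\, a_1(f_{k_0})$ for $(n,D)=1$, whence $\tilde f(T_n) = a_n(\tilde f) = f(T_n)$ for all such $n$ and $\tilde f(T_1) = 1$; taking $L = K$ (or any finite extension containing it) then gives the required eigenform. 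I expect the delicate points to be purely organizational: in the first part everything rests on the base-change compatibility $\Theta_A = \Theta_\Z \otimes_\Z \mathrm{id}_A$, which in turn relies on the freeness of the integral Hecke algebra in Lemma~\ref{lem:intstr}, after which the statement reduces to the trivial decomposition over $\C$. The one step that genuinely uses the hypotheses, and where I would be most careful, is the passage from the eigenvector $f_{k_0}$ to a \emph{normalized} eigenform: this needs a weight component with nonzero first coefficient, which is supplied precisely by the normalization $a_1(f) = 1$. It is worth stressing that no simultaneous diagonalizability of the $T_n$ is used, and that $\tilde f$ may in fact be realized over $K$ itself.
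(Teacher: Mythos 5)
Your proof is correct, but it takes a genuinely different route from the paper's in both halves. For the decomposition, the paper works on the Hecke-algebra side: it shows the restriction maps give an injection $\TT_\Q(S)\hookrightarrow\prod_{k=1}^b\TT_\Q(S_k)$ which is an isomorphism by the dimension count of Lemma~\ref{lem:intstr}, and then dualizes via $S(K)=\Hom_\Z(\TT_\Z(S),K)$. You instead work on the side of forms, exhibiting the natural map $\Theta_A:\bigoplus_k S_k(A)\to S(A)$ as the base change of $\Theta_\Z$, observing it is tautologically an isomorphism over~$\C$ via $\Psi_S$ and Proposition~\ref{prop:inj}, and descending to~$\Q$ by faithful flatness; the two arguments are essentially dual and both rest on Lemmas~\ref{lem:intstr} and~\ref{lem:basechange}. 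The real divergence is in the eigenform statement: the paper extends the ring homomorphism $f|_{\TT_\Q^{(D)}(S)}$ to all of $\TT_\Q(S)$ by going-up (which is the sole reason a finite extension $L/K$ enters) and then uses the idempotents of $\TT_\Q(S)\cong\prod_k\TT_\Q(S_k)$ to show the extension factors through a single weight. You avoid commutative algebra entirely: you decompose $f=\sum_k f_k$, use the $T_n$-equivariance of the summands to see each $f_k$ is an eigenvector with the same eigenvalues $f(T_n)$, and use $f(T_1)=1$ to find a component with $a_1(f_{k_0})\neq 0$ that you can normalize. This is more elementary and yields the mild strengthening $L=K$. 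One caveat: the paper's idempotent argument delivers as a byproduct that the extended character factors through a \emph{unique} weight, a fact invoked later in the proof of Corollary~\ref{cor:galoisrepchar0dc}; your argument does not by itself single out a unique $k$ (a priori several $f_k$ could be nonzero), so if your proof replaced the paper's, that later citation would need a separate justification.
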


\begin{proof} For each $1 \le k \le b$, we have a natural homomorphism $\TT_\Q(S) \rightarrow \TT_\Q(S_k)$ given by restriction, and hence taking the product of these, we obtain an injective homomorphism $\TT_\Q(S) \rightarrow \prod_{k=1}^b \TT_\Q(S_k)$ of $\Q$-algebras. By Lemma~\ref{lem:intstr}, we have that
$$\dim_\Q \TT_\Q(S) = \dim_\C S = \sum_{k=1}^b \dim_\C S_k = \sum_{k=1}^b \dim_\Q \TT_\Q(S_k),$$
showing that $\TT_\Q(S) \cong \prod_{k=1}^b \TT_\Q(S_k)$.
Now, we see that
\begin{multline*}
      S(K)
=     \Hom_\Z(\TT_\Z(S),K)
\cong \Hom_\Q(\TT_\Z(S)\otimes_\Z\Q,K)
\cong \Hom_\Q(\TT_\Q(S),K)\\
\cong \Hom_\Q (\prod_{k=1}^b \TT_\Q(S_k),K)
\cong \bigoplus_{k=1}^b \Hom_\Z (\TT_\Z(S_k),K)
=     \bigoplus_{k=1}^b S_k(K).
\end{multline*}

Now assume that $K$ is a field extension of~$\Q$ and that $f \in S(K)$ is a normalized eigenform. By definition and Lemma~\ref{lem:eigenforms} this means that there is a positive integer~$D$ such that the restriction of~$f$ to $\TT_\Q^{(D)}(S)$ is a ring homomorphism $\TT_\Q^{(D)}(S) \to K$.
It can be extended to a ring homomorphism $\tilde{f}: \TT_\Q(S) \to L$ for some finite extension $L/K$, since in the integral extension of rings $\TT_\Q^{(D)}(S) \hookrightarrow \TT_\Q^{(1)}(S) = \TT_\Q(S)$ we need only choose a prime ideal of $\TT_\Q(S)$ lying over the prime ideal $\ker(f) \lhd \TT_\Q^{(D)}(S)$ (`going up', see \cite[Prop.\ 4.15]{eisenbud}). We need to make this extension in order to apply the results of the preceding paragraph, which we only proved for the full Hecke algebra.

To conclude, it suffices to note that every ring homomorphism $\TT_\Q(S) \to L$ factors through a unique $\TT_\Q(S_k)$. In order to see this, one can consider a complete set of orthogonal idempotents $e_1,\dots,e_n$ of $\TT_\Q(S)$, i.e.\ $e_i^2=e_i$, $e_ie_j=0$ for $i\neq j$ and $1=e_1+\dots+e_n$. As $L$ is a field and idempotents are mapped to idempotents, each $e_i$ is either mapped to~$0$ or~$1$, and as $0$ maps to $0$ and $1$ maps to $1$, there is precisely one idempotent that is mapped to~$1$, the others to~$0$. This establishes the final assertion.
\end{proof}

We explicitly point out the following easy consequence of Lemma~\ref{lem:weights}. We let $\o$ be the ring of integers of $K$, where $K$ is a number field or a finite extension of $\Q_p$. Consider again a space $S$ of the form $S = \bigoplus_{k=1}^b S_k(\Gamma_1(M))$.
By definition, it follows that we have
$$
S(\o) = \{f \in S(K) \;|\; f(T_n) \in \o \;\forall n\} = \{f \in \bigoplus_{k=1}^b S_k(K) \;|\; f(T_n) \in \o \;\forall n\}.
$$
Hence, our spaces $S^b(\Gamma_1(Np^r))(K)$ and $S^b(\Gamma_1(Np^r))(\o)$ are precisely the ones denoted $S^b(\Gamma_1(Np^r);K)$ and $S^b(\Gamma_1(Np^r);\o)$ on p.~550 of~\cite{hida1}.

Explicitly, $f \in S(\o)$ is of the form $f = \sum_k f_k$ with $f_k \in S_k(K)$, and although none of the $f_k$ need be in $S_k(\o)$, the sum has all its coefficients in $\o$. This is the origin of the name `divided congruence' for such an $f$: Suppose for example that we have forms $g_k\in S(\o)$ for various weights $k$ and that $\sum_k g_k \equiv 0 \pod{\pi^m}$ for some $m$ where $\pi$ is a uniformizer of $\o$. Putting $f_k := g_k/\pi^m$ for each $k$ we then have $f_k\in S_k(K)$ for all $k$ as well as $f:=\sum_k f_k \in S(\o)$. Conversely, any element of $S(\o)$ arises in this way by `dividing a congruence'.
\smallskip

We now turn our attention to the case $m=1$ (recall $\Zmod{p} = \Fbar_p$) and give a short proof of the Deligne--Serre lifting lemma (Lemme 6.11 of \cite{deligne-serre}) in terms of our setup. It implies that `dc-weak', `weak' and `strong' are equivalent notions for $m=1$.

\begin{lem}[Deligne--Serre lifting lemma]\label{lem:ds} Let $S = \bigoplus_{k=1}^b S_k(\Gamma_1(M))$ as above. Let $f \in S(\Fbar_p)$ be a dc-weak eigenform of level~$M$; say, it is an eigenform for all $T_n$ for $n$ coprime to some~$D \in \N$. Then there is a normalized holomorphic eigenform~$g$ of level~$M$ and some weight~$k$ such that $g(T_n) \equiv f(T_n) \pmod{p}$ for all $n$ coprime with~$D$ (i.e., $f$ is in fact a strong eigenform).
\end{lem}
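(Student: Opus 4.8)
The plan is to reduce the statement to a purely commutative-algebra lifting problem for the Hecke algebra and then re-interpret the lift as a holomorphic eigenform via Lemma~\ref{lem:weights}. First I would set $\TT := \TT_\Z^{(D)}(S)$ and restrict $f$ to it. Since $f$ is a dc-weak eigenform away from $D$, Lemma~\ref{lem:eigenforms} shows that $\lambda := f|_{\TT} \colon \TT \to \Fbar_p$ is a \emph{ring} homomorphism; let $\fm := \ker\lambda$, a maximal ideal lying over $p$ with finite residue field $\F_q = \Ima\lambda$. By Lemma~\ref{lem:intstr}, $\TT_\Z(S)$ is a free $\Z$-module of finite rank, so its $\Z$-subalgebra $\TT$ is finitely generated and torsion-free over the PID $\Z$, hence itself free; in particular multiplication by $p$ is injective on $\TT$.

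The heart of the argument is the commutative-algebra claim that there is a ring homomorphism $\mu\colon \TT \to \Zbar_p$ whose reduction modulo the maximal ideal of $\Zbar_p$ equals $\lambda$. Because $\TT$ is $\Z$-torsion-free, every minimal prime of $\TT$ contracts to $(0)$ in $\Z$; thus any minimal prime $\q \subseteq \fm$ has characteristic $0$, and $A := \TT/\q$ is an order in a number field $E$. By lying-over for the integral extension $A \hookrightarrow \o_E$ there is a prime $\fP$ of $\o_E$ above $p$ with $\fP \cap A = \fm/\q$, and choosing an embedding $E \hookrightarrow \Qbar_p$ inducing the place $\fP$ yields $\mu\colon \TT \twoheadrightarrow A \hookrightarrow \Zbar_p$ with reduction $\lambda$. (Equivalently, and more cleanly, one can base-change to the Henselian local ring $\Zbar_p$: since its residue field is $\Fbar_p$, the finite $\Zbar_p$-algebra $\TT\otimes_\Z\Zbar_p$ splits as a product of local rings, and the local factor singled out by $\fm$ admits a $\Zbar_p$-point because $\Qbar_p$ is algebraically closed and $\Zbar_p$ is integrally closed in it; this point reduces to $\lambda$ automatically.)

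Finally I would convert $\mu$ into a classical eigenform. Inverting $p$, $\mu$ induces a ring homomorphism $\TT^{(D)}_\Q(S)\to\Qbar_p$; by the going-up step used in the proof of Lemma~\ref{lem:weights} this extends to a ring homomorphism $\TT_\Q(S)\to\Qbar_p$ (no further field extension being needed, as $\Qbar_p$ is algebraically closed), which by that same lemma factors through a unique weight component $\TT_\Q(S_k)$. The corresponding element of $S_k(\Gamma_1(M))(\Qbar_p)$ is a normalized holomorphic eigenform $g$ of weight $k\le b$ with $g(T_n)=\mu(T_n)$ for all $n$ coprime to $D$, whence $g(T_n)\equiv f(T_n)\pmod{p}$; this exhibits $f$ as a strong eigenform.

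I expect the main obstacle to be the exact matching of the reduction in the middle step: the minimal-prime construction produces only \emph{some} characteristic-$0$ eigensystem over $\q$, and a priori its reduction under the fixed embedding $\Qbar\hookrightarrow\Qbar_p$ could differ from $\lambda$ by a power of Frobenius or sit at a different maximal ideal over $\q$. Pinning it down to $\lambda$ on the nose requires either the careful choice of the place $\fP$ and of the residue embedding described above --- equivalently, replacing $g$ by a suitable $\Gal(\Qbar/\Q)$-conjugate, which is again a normalized holomorphic eigenform of the same level and weight --- or the Henselian reformulation, in which this ambiguity disappears precisely because the residue field of $\Zbar_p$ is the algebraically closed field $\Fbar_p$.
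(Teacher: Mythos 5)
Your proposal is correct and follows essentially the same route as the paper: restrict to $\TT_\Z^{(D)}(S)$, find a characteristic-zero prime strictly below the maximal ideal $\fm=\ker f$ (you get it as a minimal prime via $\Z$-torsion-freeness, the paper via equidimensionality of Krull dimension~$1$ --- the same underlying fact), realize the quotient as an order in a number field, lift via lying-over with a compatible choice of place above~$p$, and then extend to the full Hecke algebra and isolate a single weight using (the proof of) Lemma~\ref{lem:weights}. Your extra care about matching the residue embedding into $\Fbar_p$ is a point the paper handles implicitly through the commutativity of its diagram, but it is not a different argument.
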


\begin{proof}
The kernel of the ring homomorphism $f:\TT_\Z^{(D)}(S) \to \Fbar_p$ is a maximal ideal~$\fm$ of~$\TT_\Z^{(D)}(S)$.
Recall that $\TT_\Z^{(D)}(S)$ is free of finite rank as a $\Z$-module (by Lemma~\ref{lem:intstr}), whence it is equidimensional of Krull dimension~$1$, since $\Z_\ell \hookrightarrow \TT_\Z^{(D)}(S)_\lambda$ is an integral ring extension for any completion at a maximal ideal~$\lambda$ (say, lying above~$\ell$), and the Krull dimension is invariant under integral extensions (cf.\ \cite[Prop.\ 9.2]{eisenbud}, for instance.) Consequently, there is a prime ideal~$\p \lhd \TT_\Z^{(D)}(S)$ such that $\p \subsetneq \fm$. The quotient $\TT_\Z^{(D)}(S)/\p$ is an order in a number field~$K$. Moreover, there is a prime ideal $\q$ of $\o_K$ such that the following diagram is commutative:
$$ \xymatrix@=1cm{
\TT_\Z^{(D)}(S) \ar@{->>}[r] \ar@{=}[d] & \TT_\Z^{(D)}(S)/\p \ar@{->>}[d] \ar@{^(->}[r]
& \o_K  \ar@{->>}[d] \ar@{^(->}[r] & \C\\
\TT_\Z^{(D)}(S) \ar@{->>}[r] & \TT_\Z^{(D)}(S)/\fm \ar@{^(->}[r]
& \o_K/\q \ar@{^(->}[r] & \Fbar_p.}$$
The lower row is the ring homomorphism~$f$, and the upper row is a ring homomorphism that can be extended to a normalized Hecke eigenform~$g$ in $S(\C)=S$, which by (the proof of) Lemma~\ref{lem:weights} lies in $S_k(\Gamma_1(M))$ for some $k$. The commutativity of the diagram implies the claim on the reduction modulo~$p$.
\end{proof}

\section{Galois representations}\label{sec:galois}
In this section we construct a Galois representation attached to a dc-weak eigenform mod~$p^m$. For expressing its determinant, we find it convenient to work with Hida's stroke operator $\vphantom{1em}_{\mid \ell}$, which we denote $\stroke{\ell}$. We recall its definition from \cite{hida1}, p.~549. Let us consider again a space of the form $S = \bigoplus_{k=1}^b S_k(\Gamma_1(M))$ for some $b$. We now consider specifically a level $M$ written in the form
$$
M = Np^r
$$
where $p\nmid N$.

Let $Z = \Z_p^\times \times (\Z/N\Z)^\times$, into which we embed $\Z$ diagonally with dense image. We have a natural projection $\pi: Z \to \Z/p^r\Z \times \Z/N\Z \cong \Z/Np^r\Z$. Let first $f \in S$ be of weight~$k$. Hida defines for $z=(z_p,z_0) \in Z$
$$
[z]f = z_p^k \diam{\pi(z)} f
$$
where $\diam{\cdot}$ is the diamond operator. We recall that the diamond operator $\diam{d}$ for $d \in \Z/Np^r\Z$ is defined as $f_{|_k \sigma_d}$ with $\sigma_d \in \SL_2(\Z)$ such that $\sigma_d \equiv \abcd{d^{-1}} *0d \mod Np^r$. Since the diamond operator is multiplicative (it gives a group action of $\Z/Np^r\Z^\times$), so is the stroke operator.

We now show that for $z \in \Z$ the definition of $\stroke{z}$ can be made so as not to involve the weight. Let $\ell \nmid Np$ be a prime. Due to the well known equality
$$
\ell^{k-1} \diam{\ell} = T_\ell^2 - T_{\ell^2}
$$
(cf.\ for instance p.\ 53 of \cite{diamond-im}), one obtains
$$
\stroke{\ell} = \ell^k \diam{\ell} = \ell(T_\ell^2 - T_{\ell^2}).
$$
This first of all implies that $\stroke{\ell} \in \TT_\Z(S)$, since the right hand side clearly makes sense on~$S$ and is an element of~$\TT_\Z(S)$. Due to multiplicativity, all $\stroke{n}$ lie in $\TT_\Z(S)$ for $n \in \Z$. Consequently, $\stroke{n}$ acts on $S(A)$ for any ring~$A$ by its action via $\TT_\Z(S)$. Moreover, if $f \in S(A)$ is an eigenform for all $T_n$ ($n \in \N$), then it is also an eigenfunction for all~$\stroke{n}$. Strictly speaking it is not necessary for our purposes, but, nevertheless we mention that one can extend the stroke operator to a group action of~$Z$ on~$S(\o)$ for all complete $\Z_p$-algebras~$\o$ by continuity (which one must check). Thus, if $f \in S(\o)$ is an eigenfunction for all Hecke operators, then it is in particular an eigenfunction for all $\stroke{z}$ for $z \in Z$, whence sending $\stroke{z}$ to its eigenvalue on~$f$ gives rise to a character $\theta:Z \to \o^\times$, which we may also factor as $\theta = \eta \psi$ with $\psi : \Z/N\Z^\times \rightarrow \o^\times$ and $\eta : \Z_p^\times \rightarrow \o^\times$.

Since it is the starting point and the fundamental input to the sequel, we recall the existence theorem on $p$-adic Galois representations attached to normalized Hecke eigenforms for $k=2$ by Shimura, for $k>2$ by Deligne and for $k=1$ by Deligne and Serre (see, e.g., \cite{diamond-im}, p.~120). By $\Frob_\ell$ we always mean an arithmetic Frobenius element at~$\ell$.

\begin{thm}
\label{thm:galoisrepchar0} Suppose that $S = S_k(\Gamma_1(N p^r))$ with $k \ge 1$. Suppose $f \in S(\Qbar_p)$ is a normalized eigenform (say, it is an eigenform away from~$D$), so that $\diam{\ell} f = \chi(\ell) f$ for a character $\chi : (\Z/N p^r\Z)^\cross \rightarrow \Qbar_p^\cross$ for primes $\ell \nmid DNp$.

Then there is a continuous odd Galois representation
$$
\rho = \rho_{f,p}: \Gal(\Qbar/\Q) \to \GL_2(\Qbar_p)
$$
that is unramified outside $Np$ and satisfies
$$
\Tr(\rho(\Frob_\ell)) = f(T_\ell) \textrm{ and } \det(\rho(\Frob_\ell)) = \ell^{k-1} \chi(\ell)
$$
for all primes $\ell \nmid DNp$.
\end{thm}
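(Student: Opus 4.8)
The plan is to deduce the statement from the classical existence theorems of Shimura ($k=2$), Deligne ($k>2$), and Deligne--Serre ($k=1$) cited just before it, which I treat as a black box; the substantive work is to pass from our ``eigenform away from $D$'' formalism to a genuine holomorphic eigenform and then to identify the determinant character. First I would make $f$ into an eigenform for the full Hecke algebra. Writing $S = S_k(\Gamma_1(Np^r))$, Lemma~\ref{lem:eigenforms} shows that the restriction of $f$ is a ring homomorphism $\TT_\Q^{(D)}(S) \to \Qbar_p$, with prime kernel. Since $\TT_\Q(S)$ is finite, hence integral, over $\TT_\Q^{(D)}(S)$, the going-up argument used in the proof of Lemma~\ref{lem:weights} extends this to a ring homomorphism on all of $\TT_\Q(S)$; no field extension is needed because $\Qbar_p$ is algebraically closed. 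This produces a genuine normalized eigenform $\tilde f \in S(\Qbar_p)$ for all $T_n$, $n \ge 1$, with $\tilde f(T_n) = f(T_n)$ for every $n$ coprime to $D$.

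Next I would transport to the complex side. Under the fixed isomorphism $\C \cong \Qbar_p$, the eigenform $\tilde f \in S_k(\Gamma_1(Np^r))(\Qbar_p)$ corresponds, as in Section~\ref{sec:mf}, to a classical holomorphic normalized cuspidal eigenform of weight $k$ and level $Np^r$; its system of Hecke eigenvalues away from $Np$ is that of a newform $g$ of weight $k$ and some level dividing $Np^r$. The cited theorems attach to this form a continuous representation $\rho = \rho_{f,p} : \Gal(\Qbar/\Q) \to \GL_2(\Qbar_p)$, unramified outside $Np$, with $\Tr \rho(\Frob_\ell) = \tilde f(T_\ell)$ and $\det \rho(\Frob_\ell) = \ell^{k-1}\chi'(\ell)$ for all $\ell \nmid Np$, where $\chi'$ is the nebentypus of $g$. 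Combined with the previous step this already gives $\Tr \rho(\Frob_\ell) = f(T_\ell)$ for all $\ell \nmid DNp$, as required.

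It remains to identify $\chi'$ with the $\chi$ of the statement and to verify oddness. For the determinant I would use the identity $\ell^{k-1}\diam{\ell} = T_\ell^2 - T_{\ell^2}$ recalled above, valid in $\TT_\Z(S)$: it shows that the eigenvalue $\chi(\ell)$ of $\diam{\ell}$ on $f$ equals $\ell^{1-k}\bigl(f(T_\ell)^2 - f(T_{\ell^2})\bigr)$ for $\ell \nmid Np$, and applying the same identity to $g$ gives $\chi'(\ell) = \ell^{1-k}\bigl(a_\ell(g)^2 - a_{\ell^2}(g)\bigr)$; since the two eigensystems agree away from $DNp$, we get $\chi(\ell) = \chi'(\ell)$ there, so $\det \rho(\Frob_\ell) = \ell^{k-1}\chi(\ell)$ for $\ell \nmid DNp$. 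Oddness then follows from the determinant: writing $c$ for complex conjugation and using that $\det\rho = \chi_{\mathrm{cyc}}^{k-1}\tilde\chi$ with $\tilde\chi$ the finite-order character cutting out $\chi$, one has $\det \rho(c) = (-1)^{k-1}\chi(-1) = (-1)^{k-1}(-1)^k = -1$, since $\diam{-1} = (-1)^k$ on weight-$k$ forms forces the parity $\chi(-1) = (-1)^k$. I expect no serious obstacle here: everything deep is contained in the quoted theorems, and the only care required is the faithful reconciliation of the diamond-operator definition of $\chi$ and the ``away from $D$'' eigensystem with the classical newform $g$ and its nebentypus.
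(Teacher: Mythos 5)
The paper does not actually prove this theorem: it is recalled verbatim as the classical existence result of Shimura ($k=2$), Deligne ($k>2$) and Deligne--Serre ($k=1$), with a pointer to Diamond--Im, and is then used as a black box. Your write-up is a correct reduction to exactly those cited results, so in substance you are on the paper's intended route; what you add is a careful spelling-out of the three reconciliations the paper leaves implicit. (i) You upgrade an eigenform away from~$D$ to an eigenform for the full Hecke algebra by going up along the integral extension $\TT_\Q^{(D)}(S)\hookrightarrow\TT_\Q(S)$; this is precisely the device the paper itself deploys in the proofs of Lemma~\ref{lem:weights} and Theorem~\ref{thm:galoisrep}, and your observation that no field extension is needed because $\Qbar_p$ is algebraically closed is correct. (ii) You identify the nebentypus $\chi'$ of the resulting honest eigenform with the given $\chi$ at primes $\ell\nmid DNp$ via the identity $\ell^{k-1}\diam{\ell}=T_\ell^2-T_{\ell^2}$, which correctly sidesteps the fact that the hypothesis only constrains $\chi$ at such~$\ell$. (iii) Oddness follows from $\det\rho(c)=(-1)^{k-1}\chi'(-1)$ together with $\chi'(-1)=(-1)^k$, which holds because $f|_k(-I)=(-1)^kf$. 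I see no gap; the only deep content is in the quoted theorems, exactly as the paper intends.
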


\begin{cor}
\label{cor:galoisrepchar0dc} Suppose that $S = \bigoplus_{k=1}^b S_k(\Gamma_1(Np^r))$.
Suppose $f \in S(\Qbar_p)$ is a normalized eigenform (say, it is an eigenform away from~$D$), so that $[\ell] f = \eta(\ell) \psi(\ell) f$ for some characters $\psi : (\Z/N\Z)^\cross \rightarrow \Qbar_p^\cross$ and $\eta : \Z_p^\cross \rightarrow \Qbar_p^\cross$ for primes $\ell \nmid DNp$.

Then there is a continuous Galois representation
$$
\rho = \rho_{f,p}: \Gal(\Qbar/\Q) \to \GL_2(\Qbar_p)
$$
that is unramified outside $Np$ and satisfies
$$
\Tr(\rho(\Frob_\ell)) = f(T_\ell) \textrm{ and } \det(\rho(\Frob_\ell)) = f(\ell^{-1}[\ell]) = \ell^{-1} \eta(\ell) \psi(\ell)
$$
for all primes $\ell \nmid DNp$.
\end{cor}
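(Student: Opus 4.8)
The plan is to reduce the multi-weight (dc-weak) situation to the single-weight Theorem~\ref{thm:galoisrepchar0} by means of Lemma~\ref{lem:weights}. Since $\Qbar_p$ is a field extension of~$\Q$ and $f \in S(\Qbar_p)$ is a normalized eigenform away from~$D$, Lemma~\ref{lem:weights} supplies a weight~$k$ with $1 \le k \le b$ and a normalized eigenform $\tilde{f} \in S_k(\Gamma_1(Np^r))(L)$ for some finite extension $L/\Qbar_p$; as $\Qbar_p$ is already algebraically closed we may take $L = \Qbar_p$. The key feature is that $f(T_n) = \tilde{f}(T_n)$ for all $n$ coprime with~$D$, and that $\tilde{f}$, being obtained as a genuine ring homomorphism factoring through $\TT_\Q(S_k)$, is an honest eigenform of the single weight~$k$.

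Next I would feed $\tilde{f}$ into Theorem~\ref{thm:galoisrepchar0}, which produces a continuous Galois representation $\rho = \rho_{\tilde{f},p}$, unramified outside~$Np$, with $\Tr(\rho(\Frob_\ell)) = \tilde{f}(T_\ell)$ and $\det(\rho(\Frob_\ell)) = \ell^{k-1}\chi(\ell)$ for all primes $\ell \nmid DNp$, where $\diam{\ell}\tilde{f} = \chi(\ell)\tilde{f}$. Setting $\rho_{f,p} := \rho$ and using $\tilde{f}(T_\ell) = f(T_\ell)$ for $\ell \nmid DNp$ gives the trace formula exactly as claimed. For the determinant I would rewrite it via the stroke operator: on the single weight~$k$ one has $\stroke{\ell} = \ell^k \diam{\ell}$, so $\tilde{f}(\stroke{\ell}) = \ell^k\chi(\ell)$ and hence $\det(\rho(\Frob_\ell)) = \ell^{k-1}\chi(\ell) = \ell^{-1}\tilde{f}(\stroke{\ell})$. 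To transport this back to~$f$, I would invoke the identity $\stroke{\ell} = \ell(T_\ell^2 - T_{\ell^2})$ (valid for $\ell \nmid Np$), which exhibits $\stroke{\ell}$ as an element of $\TT_\Z^{(D)}(S)$ whenever $\ell \nmid D$. For such~$\ell$ the functionals $f$ and $\tilde{f}$ agree on $\TT_\Z^{(D)}(S)$, so $f(\stroke{\ell}) = \tilde{f}(\stroke{\ell})$; by the hypothesis $\stroke{\ell} f = \eta(\ell)\psi(\ell) f$ this common value is $\eta(\ell)\psi(\ell)$. Therefore $\det(\rho(\Frob_\ell)) = \ell^{-1} f(\stroke{\ell}) = f(\ell^{-1}\stroke{\ell}) = \ell^{-1}\eta(\ell)\psi(\ell)$, as required.

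The step I expect to require the most care is precisely this passage from the dc-weak $f$ (a functional on the full Hecke algebra mixing several weights, on which no uniform weight~$k$ is defined) to the genuine single-weight eigenform~$\tilde{f}$: the determinant identity $\stroke{\ell} = \ell^k \diam{\ell}$ is only meaningful once a definite weight has been singled out, so the crux is to verify that Lemma~\ref{lem:weights} preserves exactly the eigenvalues of $T_\ell$ and of $\stroke{\ell}$ for $\ell \nmid DNp$. This in turn rests on the observation that $\stroke{\ell}$ lies in the prime-to-$D$ Hecke algebra $\TT_\Z^{(D)}(S)$, on which $f$ and $\tilde{f}$ coincide; once that is in place, the remainder is routine bookkeeping with the characters $\chi$, $\eta$, and $\psi$.
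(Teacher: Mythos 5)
Your proposal is correct and follows essentially the same route as the paper: invoke Lemma~\ref{lem:weights} to place the eigensystem in a single weight~$k$, apply Theorem~\ref{thm:galoisrepchar0}, and convert the determinant via $\stroke{\ell} = \ell^k \diam{\ell}$. Your added care in distinguishing $f$ from the single-weight form $\tilde{f}$ and in checking that they agree on $\stroke{\ell} = \ell(T_\ell^2 - T_{\ell^2}) \in \TT_\Z^{(D)}(S)$ is a welcome precision, but not a different argument.
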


\begin{proof}
From Lemma~\ref{lem:weights} we know that $f$ has a unique weight~$k$, i.e.\ lies in some $S_k(\Gamma_1(Np^r))(\Qbar_p)$. Thus, $f$ also gives rise to a character $\chi : \Z/Np^r\Z^\times \rightarrow \Qbar_p^\times$ by sending the diamond operator $\diam{\ell}$ to its eigenvalue on~$f$. The assertion now follows from the equation $\ell^k\diam{\ell} = \stroke{\ell}$ and Theorem~\ref{thm:galoisrepchar0}.
\end{proof}

\begin{cor}
\label{cor:galoisrepmodpdc} Suppose that $S = \bigoplus_{k=1}^b S_k(\Gamma_1(Np^r))$.
Suppose $\bar{f} \in S(\Fbar_p)$ is a normalized eigenform (say, it is an eigenform away from~$D$), so that $[\ell] \bar{f} = \eta(\ell) \psi(\ell) \bar{f}$ for some characters $\psi : (\Z/N\Z)^\cross \rightarrow \Fbar_p^\cross$ and $\eta : \Z_p^\cross \rightarrow \Fbar_p^\cross$ for primes $\ell \nmid DNp$.

Then there is a semisimple continuous Galois representation
$$
\rho = \rho_{\bar{f},p,1}: \Gal(\Qbar/\Q) \to \GL_2(\Fbar_p)
$$
that is unramified outside $Np$ and satisfies
$$
\Tr(\rho(\Frob_\ell)) = \bar{f}(T_\ell) \textrm{ and } \det(\rho(\Frob_\ell)) = \ell^{-1}\eta(\ell) \psi(\ell)
$$
for all primes $\ell \nmid DNp$.
\end{cor}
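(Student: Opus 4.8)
The plan is to reduce the statement to the characteristic-zero Corollary~\ref{cor:galoisrepchar0dc} by means of the Deligne--Serre lifting lemma, and then to reduce the resulting $p$-adic representation modulo~$p$ and semisimplify. The point that makes the case $m=1$ tractable---and that has no analogue for $m>1$---is precisely that Lemma~\ref{lem:ds} allows one to lift a dc-weak eigenform over $\Fbar_p$ to a genuine holomorphic eigenform. Note that no residual irreducibility hypothesis is needed here, since the semisimplification always exists.

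First I would apply Lemma~\ref{lem:ds} to $\bar{f}$: as $\bar{f} \in S(\Fbar_p)$ is a dc-weak eigenform away from~$D$, there is a normalized holomorphic eigenform $g$ of level $Np^r$ and some weight~$k$ (so $g \in S_k(\Gamma_1(Np^r))(\Qbar_p)$ under the fixed identification $\C \cong \Qbar_p$) with $g(T_n) \equiv \bar{f}(T_n) \pmod{p}$ for all $n$ coprime with~$D$. Since $\stroke{\ell} = \ell(T_\ell^2 - T_{\ell^2}) \in \TT_\Z(S)$ is a fixed integral polynomial in the Hecke operators, the congruence on eigenvalues propagates to $g(\stroke{\ell}) \equiv \bar{f}(\stroke{\ell}) = \eta(\ell)\psi(\ell) \pmod{p}$ for all $\ell \nmid DNp$; this is what will match the determinant characters after reduction.

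Next I would feed $g$ into Corollary~\ref{cor:galoisrepchar0dc}, obtaining a continuous $\rho_{g,p} : \Gal(\Qbar/\Q) \to \GL_2(\Qbar_p)$, unramified outside $Np$, with $\Tr(\rho_{g,p}(\Frob_\ell)) = g(T_\ell)$ and $\det(\rho_{g,p}(\Frob_\ell)) = g(\ell^{-1}\stroke{\ell}) = \ell^{-1}g(\stroke{\ell})$ for $\ell \nmid DNp$. By continuity and compactness $\rho_{g,p}$ is conjugate into $\GL_2(\o_K)$ for some finite extension $K/\Q_p$; I then reduce modulo the maximal ideal $\p_K$ and semisimplify, defining $\rho_{\bar{f},p,1}$ to be $\overline{\rho_{g,p}}^{\mathrm{ss}} : \Gal(\Qbar/\Q) \to \GL_2(\Fbar_p)$ (using $\o_K/\p_K \hookrightarrow \Fbar_p$ and $\Fbar_p = \Zmod{p}$). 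It is semisimple by construction, continuous since its image is finite, and unramified outside $Np$ because $\rho_{g,p}$ is.

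Finally I would verify the trace and determinant formulae. Reduction modulo~$p$ and semisimplification leave the characteristic polynomial of each $\Frob_\ell$ unchanged modulo~$p$, so for $\ell \nmid DNp$ we get $\Tr(\rho_{\bar{f},p,1}(\Frob_\ell)) = g(T_\ell) \bmod p = \bar{f}(T_\ell)$ and $\det(\rho_{\bar{f},p,1}(\Frob_\ell)) = \ell^{-1}g(\stroke{\ell}) \bmod p = \ell^{-1}\eta(\ell)\psi(\ell)$ (here $\ell \neq p$ is invertible mod~$p$), as required. The one point deserving care is well-definedness: a priori the integral reduction depends on the chosen $\o_K$-lattice, but the Brauer--Nesbitt theorem guarantees that its semisimplification is independent of that choice, so $\rho_{\bar{f},p,1}$ is determined up to isomorphism. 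This is the only genuine subtlety; given Lemma~\ref{lem:ds}, everything else is the standard reduction-mod-$p$ argument.
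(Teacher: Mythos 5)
Your proposal is correct and follows essentially the same route as the paper: lift $\bar{f}$ to a characteristic-zero eigenform via Lemma~\ref{lem:ds}, invoke Corollary~\ref{cor:galoisrepchar0dc}, descend to $\GL_2(\o_K)$ by compactness, then reduce modulo the maximal ideal and semisimplify, reading off the trace and determinant from the characteristic polynomials of Frobenius. The extra remarks you add (propagating the congruence through $\stroke{\ell}=\ell(T_\ell^2-T_{\ell^2})$ and invoking Brauer--Nesbitt for lattice-independence) are correct refinements of details the paper leaves implicit.
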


\begin{proof}
By Lemma~\ref{lem:ds}, there is an eigenform $f \in S(\Zbar_p)$ whose reduction is~$\bar{f}$, whence by Corollary~\ref{cor:galoisrepchar0dc} there is an attached Galois representation $\rho_{f,p}$. Due to the compactness of $\Gal(\Qbar/\Q)$ and the continuity, there is a finite extension $K/\Q_p$ such that the representation is isomorphic to one of the form $\Gal(\Qbar/\Q) \to \GL_2(\o_K)$. We define $\rho_{\bar{f},p,1}$ as the semisimplification of the reduction of this representation modulo the maximal ideal of~$\o_K$. It inherits the assertions on the characeristic polynomial at
$\Frob_\ell$ from $\rho_{f,p}$.
\end{proof}

Next we construct a Galois representation into the completed Hecke algebra.

\begin{thm}\label{thm:galoisrep} Suppose that $S = \bigoplus_{k=1}^b S_k(\Gamma_1(Np^r))$.

Let $D$ be a positive integer and let $\fm$ be a maximal ideal of $\hat{\TT}_\Z^{(D)}(S):= \TT_\Z^{(D)}(S) \otimes_\Z \Z_p$ and denote by $\hat{\TT}_\Z^{(D)}(S)_\fm$ the completion of $\hat{\TT}_\Z^{(D)}(S)$ at $\fm$. Assume that the residual Galois representation attached to
$$
\TT_\Z^{(D)}(S) \hookrightarrow \hat{\TT}_\Z^{(D)}(S) \to \hat{\TT}_\Z^{(D)}(S)_\fm \to \hat{\TT}_\Z^{(D)}(S)/\fm \hookrightarrow \Fbar_p
$$
is absolutely irreducible (note that this ring homomorphism can be extended to a normalized eigenform
in $S(\Fbar_p)$ by the argument using `going up' from the proof of Lemma~\ref{lem:weights}).

Then there is a continuous representation
$$
\rho = \rho_\fm: \Gal(\Qbar/\Q) \to \GL_2(\hat{\TT}_\Z^{(D)}(S)_\fm),
$$
that is unramified outside $Np$ and satisfies
$$
\Tr(\rho(\Frob_\ell)) = T_\ell \textrm{ and } \det(\rho(\Frob_\ell)) = \ell^{-1} [\ell]
$$
for all primes $\ell \nmid DNp$.
\end{thm}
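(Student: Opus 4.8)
The plan is to realize $A := \hat{\TT}_\Z^{(D)}(S)_\fm$ as a complete local Noetherian ring, to build a continuous two-dimensional pseudo-character of $\Gal(\Qbar/\Q)$ valued in $A$ whose values at good Frobenius elements are the prescribed Hecke data, and then to invoke Carayol's theorem to promote this pseudo-character to a genuine representation. First I would record the ring-theoretic structure of $A$. Since $\TT_\Z^{(D)}(S)$ is finite free over $\Z$ (Lemma~\ref{lem:intstr}), the ring $\hat{\TT}_\Z^{(D)}(S)$ is finite free over $\Z_p$ and, being a finite $\Z_p$-algebra, decomposes as the product of its completions at its finitely many maximal ideals. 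Hence $A$ is a direct factor: a complete local Noetherian ring, finite free as a $\Z_p$-module, with residue field $k$ a finite extension of $\F_p$. In particular $p$ is a non-zerodivisor and $A \hookrightarrow A[1/p] \hookrightarrow A\otimes_{\Z_p}\Qbar_p$.

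Next I would produce the characteristic-zero input. The characteristic-zero points of $A$ are exactly the normalized eigensystems $f \in S(\Qbar_p)$ whose reduction lies in the support of $\fm$. To each such $f$, Corollary~\ref{cor:galoisrepchar0dc} attaches a continuous representation $\rho_{f,p}\colon\Gal(\Qbar/\Q)\to\GL_2(\Qbar_p)$, unramified outside $Np$, with $\Tr\rho_{f,p}(\Frob_\ell)=f(T_\ell)$ and $\det\rho_{f,p}(\Frob_\ell)=f(\ell^{-1}[\ell])$ for $\ell\nmid DNp$. Assembling the traces and determinants of these representations over all such $f$ gives continuous trace and determinant functions on $\Gal(\Qbar/\Q)$; their values at $\Frob_\ell$ are the images of $T_\ell$ and $\ell^{-1}[\ell]$, which lie in $A$. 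By Chebotarev density and continuity all their values lie in the closed $\Z_p$-subalgebra of $A$ generated by these elements, and a fortiori in $A$. This produces a continuous two-dimensional pseudo-character $(\tau,\delta)\colon\Gal(\Qbar/\Q)\to A$, factoring through the Galois group of the maximal extension unramified outside $Np$, with $\tau(\Frob_\ell)=T_\ell$ and $\delta(\Frob_\ell)=\ell^{-1}[\ell]$.

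Finally I would apply Carayol's theorem~\cite{carayol}. The reduction of $(\tau,\delta)$ modulo $\fm$ is the trace-and-determinant datum of the residual representation attached to $\fm$, which is absolutely irreducible by hypothesis. Carayol's result then guarantees that over the complete local ring $A$ this pseudo-character is the trace of a genuine continuous representation $\rho_\fm\colon\Gal(\Qbar/\Q)\to\GL_2(A)$, unique up to conjugation and lifting the residual representation. The prescribed identities $\Tr\rho_\fm(\Frob_\ell)=T_\ell$ and $\det\rho_\fm(\Frob_\ell)=\ell^{-1}[\ell]$ hold by construction, and $\rho_\fm$ is unramified outside $Np$ because the pseudo-character is.

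\textbf{Main obstacle.} The genuine representations $\rho_{f,p}$ live only over the characteristic-zero \emph{fields} attached to honest eigenforms, so the naive assembly produces a pseudo-character only over the product $\prod_f\Qbar_p$, i.e.\ over the reduced quotient of $A\otimes_{\Z_p}\Qbar_p$. The delicate point is to control the nilpotents: one must know that the trace and determinant functions are actually defined over the possibly non-reduced ring $A$, taking the exact values $T_\ell$ and $\ell^{-1}[\ell]$ rather than merely their images in $A^{\mathrm{red}}$, and that the pseudo-character relations persist there. This is precisely where residual absolute irreducibility, through Carayol's theorem, is indispensable: it both pins down the coefficients over $A$ and reconstructs a true representation from the trace datum over a ring that need not be reduced. (One can sidestep the nilpotents by enlarging $D$ so that $Np\mid D$, which forces $\hat{\TT}_\Z^{(D)}(S)$ to be reduced, but for general $D$ the full strength of Carayol's result is needed.)
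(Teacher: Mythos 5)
Your overall strategy\dash---assemble the characteristic-zero representations attached to the eigenforms in the support of~$\fm$, observe that the traces at good Frobenii are the Hecke operators $T_\ell \in A$, and descend via Carayol using residual absolute irreducibility\dash---is the same as the paper's. But there is a genuine gap exactly at the point you flag as the ``main obstacle'' and then wave away. Carayol's Th\'eor\`eme 2 is not a pseudo-character--lifting theorem over an arbitrary complete local ring: it takes as input a \emph{genuine} representation over a semilocal extension $A' = \prod_i A_i'$ of $A$ whose traces lie in $A$, and it presupposes $A \subseteq A'$. In your construction $A'$ is (essentially) $\prod_i \o_i'$, a product of domains, so the map $A \to A'$ kills every nilpotent of~$A$. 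For general~$D$ the ring $\hat{\TT}_\Z^{(D)}(S)$ can genuinely be non-reduced (the operators $T_q$ for $q \mid Np$ need not act semisimply on~$S$), in which case $A \to \prod_i \o_i'$ is not injective, Carayol's hypotheses are not met, and even a formal application would only pin down $\Tr \rho(\Frob_\ell)$ in the image of $A$ in $A'$, not in $A$ itself. So the parenthetical ``sidestep'' is not optional: it is the argument. The statement ``for general $D$ the full strength of Carayol's result is needed'' is not substantiated and, as far as Th\'eor\`eme 2 goes, not true.

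What the paper actually does, and what your write-up needs, is the following two-step structure. First, assume every prime divisor of $Np$ divides~$D$; then the generators $T_n$, $(n,D)=1$, of $\TT_\Z^{(D)}(S)$ are normal for the Petersson product, hence simultaneously diagonalizable, so $\TT_\Z^{(D)}(S)\otimes\Q$ is a product of number fields, $A$ is reduced, and $A \hookrightarrow \prod_i \o_i$ honestly; one then extends each character $\TT_\Z^{(D)}(S) \to K_i$ to a normalized eigenform in $S(\Qbar_p)$ by going up (a step you also elide when you identify the ``characteristic-zero points of $A$'' with eigenforms), applies Corollary~\ref{cor:galoisrepchar0dc}, forms the product representation over $\prod_i \o_i'$, and invokes Carayol's Th\'eor\`eme 2 legitimately. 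Second, for arbitrary~$D$ one does \emph{not} try to work with the possibly non-reduced $\hat{\TT}_\Z^{(D)}(S)_\fm$ directly: one runs the first step with $D' = DNp$ and $\fm' = \fm \cap \hat{\TT}_\Z^{(D')}(S)$, and then composes the resulting $\rho_{\fm'}$ with the natural ring homomorphism $\hat{\TT}_\Z^{(D')}(S)_{\fm'} \to \hat{\TT}_\Z^{(D)}(S)_{\fm}$; since the required trace identities only involve $T_\ell$ and $[\ell]$ for $\ell \nmid DNp = D'Np$, which already lie in $\hat{\TT}_\Z^{(D')}(S)$, this yields the theorem as stated. With this reduction made explicit, and with the going-up extension of the characters $g_i$ spelled out, your argument becomes the paper's proof.
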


\begin{proof}
Assume first that all prime divisors of $Np$ also divide~$D$. As the Hecke operators $T_n$ with $n$ coprime to~$D$ commute with each other and are diagonalizable (as elements of $\End_\C(S)$), there is a $\C$-basis $\Omega$ for $S$ consisting of eigenforms for $\TT_\Z^{(D)}(S)$. As $\TT_\Z^{(D)}(S)$ is finite over $\Z$, for each $f \in \Omega$, its image onto $\TT_\Z^{(D)}(\C f)$ is an order in a number field. Here, obviously $\TT_\Z^{(D)}(\C f)$ denotes the $\Z$-subalgebra of $\End_\C (\C f)$ generated by the $T_n$ with $(n,D)=1$.

Consider the natural map $\TT_\Z^{(D)}(S) \rightarrow \prod_{f \in \Omega} \TT_\Z^{(D)}(\C f)$, which is an injective homomorphism because $\Omega$ is a $\C$-basis for $S$. Letting $R = \TT_\Z^{(D)}(S) \otimes \Q$, we see that
$\prod_{f \in \Omega} \TT_\Z^{(D)}(\C f) \otimes \Q$ is a semisimple $R$-module, as each $\TT_\Z^{(D)}(\C f) \otimes \Q$ is a simple $R$-module. Thus, the $R$-submodule $R \subset \prod_{f \in \Omega} \TT_\Z^{(D)}(\C f) \otimes \Q$ is also a semisimple $R$-module, and $R = \TT_\Z^{(D)}(S) \otimes \Q$ is a semisimple ring. It follows that $\TT_\Z^{(D)}(S) \otimes \Q \cong \prod_i F_i$, where the $F_i$ are a finite collection of number fields. This means that $\TT_\Z^{(D)}(S) \otimes \Q_p \cong \prod_i K_i$ with the $K_i$ a finite collection of finite extensions of $\Q_p$.

Thus, there is an injective homomorphism $\hat{\TT}_\Z^{(D)}(S) \hookrightarrow \prod_i \o_i$, where $\o_i$ is the ring of integers of $K_i$. Hence, there is an injective homomorphism $\hat{\TT}_\Z^{(D)}(S)_{\fm} \hookrightarrow \prod_i \o_i$, which is obtained from the previous one by discarding factors where $\fm$ is not sent into the maximal ideal of $\o_i$. Each projection $\hat{\TT}_\Z^{(D)}(S)_\fm \rightarrow \o_i$ is a map of local rings.

Each ring homomorphism $g_i : \TT_\Z^{(D)}(S) \rightarrow K_i$ lifts to a ring homomorphism $f_i : \TT_\Z(S) \rightarrow E_i$, where $E_i$ is a finite extension of $K_i$. By Corollary~\ref{cor:galoisrepchar0dc}, for each $i$, there is a continuous Galois representation $\rho_i : \Gal(\Qbar/\Q) \rightarrow \GL_2(\o_i')$, where $\o_i'$ is the ring of integers of $E_i$.

Let $\rho = \prod_i \rho_i : \Gal(\Qbar/\Q) \rightarrow \prod_i \GL_2(\o_i') = \GL_2(\prod_i \o'_i)$ be the product representation. Under the inclusion $\TT_\Z^{(D)}(S) \hookrightarrow \prod_i \o'_i$, we see for $\ell \nmid DNp$, that $\Tr \rho(\Frob_\ell) = T_\ell$ and $\det \rho(\Frob_\ell) = \ell^{-1} [\ell]$. The residual Galois representations $\overline{\rho}_i : \Gal(\Qbar/\Q) \rightarrow \GL_2(k_i')$, where $k_i'$ is the residue field of $\o_i'$, are all isomorphic to the Galois representation attached to $\TT_\Z^{(D)}(S) \rightarrow \hat{\TT}_\Z^{(D)}(S)/\fm$, and hence are absolutely irreducible.
\smallskip

We will now apply \cite[Th\'eor\`eme 2]{carayol}, the setting of which is as follows. Suppose that $A$ is a local henselian ring with maximal ideal $\fm$ and residue field $F = A/\fm$. Assume that the Brauer group of $F$ vanishes (this will be the case if $F$ is finite). Let $A'$ be a semilocal extension of $A$: $A' = \prod_i A_i'$ with the $A_i'$ local rings with maximal ideals $\fm_i$ and residue fields $F_i' = A_i'/\fm_i$ that are extensions of $F$. Suppose that we are given an $A$-algebra $R$ and a representation $\rho' = \prod_i \rho_i' : ~ R\otimes_A A' \rightarrow M_n(A') = \prod_i M_n(A_i')$. Suppose further that $\Tr \rho'(r\otimes 1) \in A$ for all $r\in R$, and that the attached residual representations $\bar\rho'_i : ~ R\otimes_A F_i' \rightarrow M_n(F_i')$ are all absolutely irreducible. The conclusion of \cite[Th\'eor\`eme 2]{carayol} is then that $\rho'$ is (equivalent to) the base change from $A$ to $A'$ of a representation $\rho : ~ R \rightarrow M_n(A)$.
\smallskip

We see that we can apply \cite[Th\'eor\`eme 2]{carayol} with $A = \hat{\TT}_\Z^{(D)}(S)_{\fm}$ (which is a complete local ring, hence henselian), and $A' = \prod_i \o_i'$ (which is a semilocal extension of $A$), to deduce that the representation $\rho$ descends to a continuous Galois representation
\begin{equation*}
  \rho_\fm : \Gal(\Qbar/\Q) \rightarrow \GL_2(\hat{\TT}_\Z^{(D)}(S)_{\fm}),
\end{equation*}
as claimed.

For the general case, when $D$ is not divisible by all prime divisors of $Np$, one first applies the above with $D' := DNp$ and the maximal ideal
$\fm'$ of $\hat{\TT}_\Z^{(D')}$ given as $\fm \cap \hat{\TT}_\Z^{(D')}$ to obtain $\rho_{\fm'} : \Gal(\Qbar/\Q) \rightarrow \GL_2(\hat{\TT}_\Z^{(D')}(S)_{\fm'})$, which can finally be composed with the natural map $\hat{\TT}_\Z^{(D')}(S)_{\fm'} \to \hat{\TT}_\Z^{(D)}(S)_{\fm}$.
\end{proof}

\begin{cor}\label{cor:galoisrepAdc} Suppose that $S = \bigoplus_{k=1}^b S_k(\Gamma_1(Np^r))$.
Let $A$ be a complete local ring with maximal ideal $\p$ of residue characteristic~$p$. Suppose $f \in S(A)$ is a normalized eigenform (say, it is an eigenform away from~$D$), so that $[\ell] f = \eta(\ell) \psi(\ell) f$ for some characters $\psi : (\Z/N\Z)^\cross \rightarrow A^\cross$ and $\eta : \Z_p^\cross \rightarrow A^\cross$, for all $\ell \nmid DNp$.

Assume the Galois representation attached to the reduction $\bar{f} : \TT_\Z(S) \rightarrow A \rightarrow A/\p$ mod $\p$ of $f$, which defines a normalized eigenform in $S(\Fbar_p)$, is absolutely irreducible (cf.\ Corollary~\ref{cor:galoisrepmodpdc}).

Then there is a continuous Galois representation
$$
\rho = \rho_{f,p}: \Gal(\Qbar/\Q) \to \GL_2(A)
$$
that is unramified outside $Np$ and satisfies
$$
\Tr(\rho(\Frob_\ell)) = f(T_\ell) \textrm{ and } \det(\rho(\Frob_\ell)) = \ell^{-1} \eta(\ell) \psi(\ell)
$$
for all primes $\ell \nmid DNp$.
\end{cor}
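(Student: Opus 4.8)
The plan is to obtain $\rho_{f,p}$ by specializing the representation $\rho_\fm$ of Theorem~\ref{thm:galoisrep} along the ring homomorphism cut out by $f$. I would first observe that $A$ carries a natural $\Z_p$-algebra structure: since $\p$ has residue characteristic $p$ we have $p \in \p$, hence $p^n \in \p^n$, so the maps $\Z/p^n\Z \to A/\p^n$ are well defined and passing to the limit (using that $A$ is $\p$-adically complete) extends the structure map $\Z \to A$ to a continuous homomorphism $\Z_p \to A$.

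Next, because $f$ is a normalized eigenform away from $D$, Lemma~\ref{lem:eigenforms} tells me that the restriction of $f$ to $\TT_\Z^{(D)}(S)$ is a ring homomorphism $f : \TT_\Z^{(D)}(S) \to A$. Using the $\Z_p$-algebra structure on $A$, I would extend it to a ring homomorphism $\hat f : \hat\TT_\Z^{(D)}(S) = \TT_\Z^{(D)}(S)\otimes_\Z\Z_p \to A$ and set $\fm := \hat f^{-1}(\p)$. The image of $\TT_\Z^{(D)}(S)$ in $A/\p$ is a finite domain, hence a finite field, so $\fm$ is a maximal ideal of $\hat\TT_\Z^{(D)}(S)$. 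As $\hat\TT_\Z^{(D)}(S)$ is finite over $\Z_p$, it decomposes as the finite product of its completions at its maximal ideals; since $A$ is local, exactly one idempotent of this product maps to $1$, namely the one cutting out the $\fm$-factor, so $\hat f$ factors through a local homomorphism $\hat\TT_\Z^{(D)}(S)_\fm \to A$.

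I would then verify that $\fm$ meets the hypothesis of Theorem~\ref{thm:galoisrep}: the residual representation attached to $\TT_\Z^{(D)}(S) \to \hat\TT_\Z^{(D)}(S)/\fm \hookrightarrow \Fbar_p$ is the semisimple representation determined by the Hecke eigenvalue system of the reduction $\bar f$ modulo $\p$, i.e.\ exactly $\rho_{\bar f,p,1}$ from Corollary~\ref{cor:galoisrepmodpdc}, which is absolutely irreducible by assumption. Theorem~\ref{thm:galoisrep} then yields a continuous $\rho_\fm : \Gal(\Qbar/\Q) \to \GL_2(\hat\TT_\Z^{(D)}(S)_\fm)$, unramified outside $Np$, with $\Tr\rho_\fm(\Frob_\ell) = T_\ell$ and $\det\rho_\fm(\Frob_\ell) = \ell^{-1}[\ell]$ for $\ell\nmid DNp$. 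Applying the local homomorphism $\hat\TT_\Z^{(D)}(S)_\fm \to A$ entrywise to $\rho_\fm$ produces the desired continuous $\rho_{f,p} : \Gal(\Qbar/\Q)\to\GL_2(A)$; the trace at $\Frob_\ell$ becomes $\hat f(T_\ell) = f(T_\ell)$ and the determinant becomes $\ell^{-1}\hat f([\ell]) = \ell^{-1}\eta(\ell)\psi(\ell)$, using the relation $[\ell]f = \eta(\ell)\psi(\ell)f$.

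Almost all of the substance is already carried by Theorem~\ref{thm:galoisrep}, so I expect no serious difficulty. The only points requiring care, and the closest thing to an obstacle, are the verification that $\hat f$ genuinely factors through the completed localization $\hat\TT_\Z^{(D)}(S)_\fm$ (where the completeness and locality of $A$ are used) and the identification of the residual representation determined by $\fm$ with $\rho_{\bar f,p,1}$, which is what lets Theorem~\ref{thm:galoisrep} apply. Once these are in place the conclusion is formal.
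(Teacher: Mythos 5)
Your proposal is correct and follows essentially the same route as the paper: the paper's proof likewise factors the ring homomorphism $f : \TT_\Z^{(D)}(S) \to A$ through $\hat{\TT}_\Z^{(D)}(S)_\fm$ for the maximal ideal $\fm$ given as the kernel of $\hat{\TT}_\Z^{(D)}(S) \to A \twoheadrightarrow A/\p$ (using that $A$ is complete and local), and then composes with $\rho_\fm$ from Theorem~\ref{thm:galoisrep}. Your write-up simply makes explicit the details (the $\Z_p$-algebra structure on $A$, the idempotent argument, and the identification of the residual representation) that the paper leaves implicit.
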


\begin{proof}
Since $S(A)$ is a normalized eigenform, $f : \TT_\Z^{(D)}(S) \rightarrow A$ is a ring homomorphism, which factors through $\hat{\TT}_\Z^{(D)}(S)_\fm$ for some maximal ideal~$\fm$, since $A$ is complete and local. (The ideal~$\fm$ can be seen as the kernel of $\hat{\TT}_\Z^{(D)}(S)  \to A \twoheadrightarrow A/\p$.) We thus have a ring homomorphism $\hat{\TT}_\Z^{(D)}(S)_\fm \rightarrow A$. Composing this with the Galois representation $\rho_\fm$ from Theorem~\ref{thm:galoisrep} yields the desired Galois representation~$\rho_{f,p}$.
\end{proof}

For our applications, the most important case of Corollary \ref{cor:galoisrepAdc} is when $A = \Zmod{p^m}$. Note also that one can attach a Galois representation by the above corollary to any ring homomorphism $\TT_\Z^{(D)}(S) \rightarrow A$.

\begin{proof}[Proof of Theorem~\ref{main-galois}]
It suffices to apply Corollary~\ref{cor:galoisrepAdc} with $A = \Zmod{p^m}$.
\end{proof}

\section{Stripping powers of \texorpdfstring{$p$}{p} from the level}\label{sec:stripping}

In this section, we use results of Katz and Hida in order to remove powers of $p$ from the level of cusp forms over $\Zmod{p^m}$, at the expense of using divided congruences.

Let $M$ be any positive integer. Let $\o$ be the ring of integers of either a number field or a finite extension of~$\Q_p$. Define
$$
{\mathbb S}(\Gamma_1(M))(\o) := \ilim_{b\ge 1} S^b(\Gamma_1(M))(\o).
$$
Specializing $\o$ to $\Z_p$, we complete these spaces, which are of infinite rank, and put
\begin{align*}
\bar{\mathbb S}(\Gamma_1(M);\Z_p)
& :=    \prlim_{m \ge 1} {\mathbb S}(\Gamma_1(M))(\Z_p) \otimes_{\Z_p} \Z_p/p^m\Z_p \\
& \cong \prlim_{m \ge 1} {\mathbb S}(\Gamma_1(M))(\Z_p/p^m\Z_p) \\
& \cong \prlim_{m \ge 1} {\mathbb S}(\Gamma_1(M))(\Z) \otimes_\Z \Z/p^m\Z.
\end{align*}
For the isomorphisms, we use that the direct limit is exact on modules and
\begin{multline}\label{eq:compare}
S^b(\Gamma_1(M))(\Z_p) \otimes_{\Z_p} \Z_p/p^m\Z_p \cong
S^b(\Gamma_1(M))(\Z_p/p^m\Z_p)\\
\cong S^b(\Gamma_1(M))(\Z)\otimes_{\Z} \Z/p^m\Z,
\end{multline}
which is an application of Lemma~\ref{lem:basechange}.

\begin{thm}[Hida]\label{thm:hida}
Assume $p \ge 5$. Let $N$ be a positive integer prime to~$p$ and let $r \in \N$.

Under the $q$-expansion map, the images of $\bar{\mathbb S}(\Gamma_1(N);\Z_p)$ and $\bar{\mathbb S}(\Gamma_1(Np^r);\Z_p)$ agree inside $\Z_p[[q]]$.
\end{thm}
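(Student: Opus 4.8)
The plan is to derive the statement from the corresponding theorem of Hida, using the identification of spaces already recorded above: our $S^b(\Gamma_1(Np^r))(K)$ and $S^b(\Gamma_1(Np^r))(\o)$ are Hida's $S^b(\Gamma_1(Np^r);K)$ and $S^b(\Gamma_1(Np^r);\o)$ of p.~550 of \cite{hida1}, and similarly for level~$N$. After forming $\ilim_b$ and completing $p$-adically, $\bar{\mathbb S}(\Gamma_1(N);\Z_p)$ and $\bar{\mathbb S}(\Gamma_1(Np^r);\Z_p)$ become Hida's completed spaces of divided congruence cusp forms of tame level~$N$ carrying, respectively, no and full $\Gamma_1(p^r)$-structure at~$p$. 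I would then prove the asserted equality of $q$-expansion images by a pair of opposite inclusions.

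One inclusion is formal. Since $\Gamma_1(Np^r)\subseteq\Gamma_1(N)$, a cusp form of level~$N$ is a cusp form of level~$Np^r$ with the \emph{same} $q$-expansion, so the $q$-expansion image of $S^b(\Gamma_1(N))(\Z)$ in $\Z[[q]]$ is contained in that of $S^b(\Gamma_1(Np^r))(\Z)$. Reducing modulo~$p^m$ is compatible with $q$-expansion by Lemma~\ref{lem:basechange}, and the inclusion is preserved; taking $\ilim_b$ and then $\prlim_m$ shows that the $q$-expansion image of $\bar{\mathbb S}(\Gamma_1(N);\Z_p)$ lies in that of $\bar{\mathbb S}(\Gamma_1(Np^r);\Z_p)$.

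The substantive inclusion is the reverse one, and here I would appeal to Katz--Hida theory. The key phenomenon is that, $p$-adically, the $\Gamma_1(p^r)$-structure is redundant: over the ordinary locus the Igusa tower already trivializes the \'etale quotient of $E[p^\infty]$, so a $\Gamma_1(p^r)$-structure restricts to a datum pulled back from the Igusa tower. Consequently a classical cusp form of level~$Np^r$ of \emph{any} weight defines a Katz $p$-adic cusp form of tame level~$N$ with the same $q$-expansion, the nebentypus at~$p$ being absorbed into the weight character (the $\Z_p^\times$-action) rather than producing new $q$-expansions. Combining this with Hida's identification of the completed space of divided congruences with Katz's space of $p$-adic cusp forms of tame level~$N$ --- this is where $p\ge 5$ enters, securing the weight-$(p-1)$ Eisenstein lift $E_{p-1}$ of the Hasse invariant and the agreement of Katz and classical forms in the relevant weights and levels (compare the remark in Section~\ref{intstr} that $S_k(\Gamma_1(M))(A)$ coincides with the module of Katz modular forms once $k\ge 2$ and $M\ge 5$ is invertible in~$A$) --- and then reducing modulo~$p^m$ and passing to the limit, every $q$-expansion arising from $\bar{\mathbb S}(\Gamma_1(Np^r);\Z_p)$ already arises from $\bar{\mathbb S}(\Gamma_1(N);\Z_p)$.

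The main obstacle is precisely this reverse inclusion: all the genuine content sits in the absorption of the $p$-power level into the Igusa tower and in the identification of completed divided congruences with $p$-adic cusp forms, which are exactly the deep inputs furnished by \cite{katz} and \cite{hida1}. A genuinely self-contained argument would force us to redevelop the $q$-expansion principle and the geometry of the Igusa tower; for that reason I would instead cite Hida's theorem directly, once the spaces have been matched as in the first paragraph.
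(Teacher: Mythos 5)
Your proposal is correct and takes essentially the same route as the paper: after matching the spaces with those on p.~550 of \cite{hida1}, the paper simply cites \cite[(1.3)]{hida1}, exactly as you ultimately propose to do (your sketch of the Igusa-tower/Katz-theory content is accurate background but is not redeveloped in the paper either). The only point the paper adds that you omit is the brief remark that Hida's completion with respect to the norm $|f|_p$ agrees with the projective-limit construction of $\bar{\mathbb S}$ used here.
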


\begin{proof}
This result is stated in \cite[(1.3)]{hida1}. (For the sake of completeness, let us point out that Hida's spaces $\bar{\mathbb S}$ result from completing with respect to a natural norm $|f|_p$ whereas our $\bar{\mathbb S}$ arise as projective limits. However, it is easy to see that the two points of view are equivalent. Also, the spaces of modular forms that both we and Hida are working with are spaces of `classical forms', cf.\ Section~\ref{sec:mf}.)
\end{proof}

\begin{prop}\label{prop:remove}
Assume $p \ge 5$. Let $K$ be a number field and $\o$ its ring of integers. Let $N$ be a positive integer prime to~$p$ and $r \in \N$. Let $f \in {\mathbb S}(\Gamma_1(Np^r);\o)$.

Then for all $m \in \N$, there are $b_m \ge 1$ and $g_m \in S^{b_m}(\Gamma_1(N))(\o) \hookrightarrow {\mathbb S}(\Gamma_1(N))(\o)$ such that $f(q) \equiv g_m(q) \pmod{p^m}$.
\end{prop}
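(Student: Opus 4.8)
The plan is to reduce the assertion, which is stated over the ring of integers $\o$ of a number field, to the case of $\Z_p$-coefficients, where Hida's Theorem~\ref{thm:hida} applies directly, and then to reassemble over $\o$. Fix $m$, and pick $b$ with $f \in S^b(\Gamma_1(Np^r))(\o)$. First I would split $f$ along a $\Z$-basis $\omega_1,\dots,\omega_d$ of $\o$ (with $d = [K:\Q]$). Since $\TT_\Z(S^b(\Gamma_1(Np^r)))$ is free of finite rank over $\Z$ by Lemma~\ref{lem:intstr}, Lemma~\ref{lem:basechange} gives $S^b(\Gamma_1(Np^r))(\o) \cong S^b(\Gamma_1(Np^r))(\Z) \otimes_\Z \o = \bigoplus_{j=1}^d S^b(\Gamma_1(Np^r))(\Z)\,\omega_j$. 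Hence $f = \sum_{j=1}^d f_j\,\omega_j$ with $f_j \in S^b(\Gamma_1(Np^r))(\Z)$, and on $q$-expansions $f(q) = \sum_{j=1}^d f_j(q)\,\omega_j$ with each $f_j(q) \in \Z[[q]]$.

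Next I would feed each $f_j$ into Theorem~\ref{thm:hida}. Composing the natural maps $S^b(\Gamma_1(Np^r))(\Z) \to \bar{\mathbb S}(\Gamma_1(Np^r);\Z_p)$ sends $f_j$ to an element whose $q$-expansion is $f_j(q) \in \Z_p[[q]]$; by Theorem~\ref{thm:hida} this lies in the $q$-expansion image of $\bar{\mathbb S}(\Gamma_1(N);\Z_p)$, so there is $h_j \in \bar{\mathbb S}(\Gamma_1(N);\Z_p)$ with $h_j(q) = f_j(q)$ in $\Z_p[[q]]$. Under $\bar{\mathbb S}(\Gamma_1(N);\Z_p) = \prlim_{m'} {\mathbb S}(\Gamma_1(N))(\Z/p^{m'}\Z)$, our element $h_j$ has a component $h_j^{(m)} \in {\mathbb S}(\Gamma_1(N))(\Z/p^m\Z) = \ilim_{c} S^c(\Gamma_1(N))(\Z/p^m\Z)$ at the fixed index $m$, which already lives at some finite weight bound $c_j$ and has $q$-expansion $f_j(q) \bmod p^m$. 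Setting $c := \max_j c_j$, all these components lie in $S^c(\Gamma_1(N))(\Z/p^m\Z)$.

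Then I would lift back to $\Z$-coefficients and recombine. As $\TT_\Z(S^c(\Gamma_1(N)))$ is free of finite rank over $\Z$ (Lemma~\ref{lem:intstr}), applying $\Hom_\Z(\TT_\Z(S^c(\Gamma_1(N))),-)$ to the surjection $\Z \twoheadrightarrow \Z/p^m\Z$ gives a surjection $S^c(\Gamma_1(N))(\Z) \twoheadrightarrow S^c(\Gamma_1(N))(\Z/p^m\Z)$, so each $h_j^{(m)}$ lifts to $\tilde h_j \in S^c(\Gamma_1(N))(\Z)$ with $\tilde h_j(q) \equiv f_j(q) \pmod{p^m}$ in $\Z[[q]]$. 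Putting $b_m := c$ and $g_m := \sum_{j=1}^d \tilde h_j\,\omega_j \in S^c(\Gamma_1(N))(\o)$, one finds $g_m(q) - f(q) = \sum_{j=1}^d (\tilde h_j(q) - f_j(q))\,\omega_j$, whose coefficients lie in $p^m\o$; that is, $f(q) \equiv g_m(q) \pmod{p^m}$, as desired.

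The only non-formal ingredient is Theorem~\ref{thm:hida}, so the crux of the argument is arranging its hypotheses, and the expected obstacle is precisely that Theorem~\ref{thm:hida} is available only with $\Z_p$-coefficients while the Proposition is stated over an arbitrary ring of integers $\o$. I would overcome this by splitting $f$ along a $\Z$-basis of $\o$, so that each component $f_j$ has rational (hence $p$-adic) integral $q$-expansion; everything else\dash---threading through the direct and inverse limits to extract a finite weight bound and lifting from $\Z/p^m\Z$ back to $\Z$\dash---is formal and rests only on the freeness of the Hecke algebras over $\Z$ (Lemma~\ref{lem:intstr}) together with base change (Lemma~\ref{lem:basechange}).
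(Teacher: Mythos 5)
Your proof is correct and follows essentially the same route as the paper's: decompose $f$ over $\Z$-coefficients via Lemma~\ref{lem:basechange} (the paper writes $f=\sum_i a_i f_i$ with arbitrary $a_i\in\o$ rather than a $\Z$-basis of $\o$, an immaterial difference), pass each component through Theorem~\ref{thm:hida}, extract a finite weight bound from the limits, lift back to $\Z$-coefficients, and recombine. Your explicit justification of the lift from $\Z/p^m\Z$ to $\Z$ via freeness of the Hecke algebra is a detail the paper leaves implicit, but the argument is the same.
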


\begin{proof}
There is a $b \ge 1$ such that $f \in S^b(\Gamma_1(Np^r))(\o)$. Since $S^{b}(\Gamma_1(Np^r))(\o) = S^{b}(\Gamma_1(Np^r))(\Z) \otimes_\Z \o$ by Lemma~\ref{lem:basechange}, there are $f_i \in S^{b}(\Gamma_1(Np^r))(\Z)$ and $a_i \in \o$ for $i=1,\dots,t$ such that $f = \sum_{i=1}^t a_i f_i$.

There is a $b_m \ge 1$ such that the images of $f_i$ under the composition of maps
\begin{multline*}
S^{b}(\Gamma_1(Np^r))(\Z) \hookrightarrow {\mathbb S}(\Gamma_1(Np^r))(\Z) \hookrightarrow \bar{\mathbb S}(\Gamma_1(Np^r);\Z_p) \xrightarrow{\sim \textnormal{Thm.~\ref{thm:hida}}} \bar{\mathbb S}(\Gamma_1(N);\Z_p)\\
\twoheadrightarrow \bar{\mathbb S}(\Gamma_1(N);\Z_p) \otimes_{\Z_p} \Z_p/p^m\Z_p \cong {\mathbb S}(\Gamma_1(N))(\Z_p) \otimes_{\Z_p} \Z_p/p^m\Z_p
\end{multline*}
all lie in $S^{b_m}(\Gamma_1(N))(\Z_p) \otimes_{\Z_p} \Z_p/p^m\Z_p \cong S^{b_m}(\Gamma_1(N))(\Z) \otimes \Z/p^m\Z$. Hence, there are $g_{i,m} \in S^{b_m}(\Gamma_1(N))(\Z)$ such that $f_i(q) \equiv g_{i,m}(q) \pmod{p^m}$. Finally, $g_m := \sum_{i=1}^t a_i g_{i,m} \in S^{b_m}(\Gamma_1(N))(\o)$ is such that $f(q) \equiv g_m(q) \pmod{p^m}$.
\end{proof}

\begin{proof}[Proof of Proposition~\ref{prop:hatada}] By virtue of Lemma~\ref{lem:lifting}, we can lift $f$ to an element of $S^b(\Gamma_1(Np^r))(\o_K)$ for some number field~$K$. Hence, Proposition~\ref{prop:remove} applies, yielding a form~$g_m$, whose reduction mod $p^m$ satisfies the requirements.
\end{proof}

\section{On the relationships between strong, weak and dc-weak}\label{relations} In this section we make a number of remarks concerning the notions of a mod $p^m$ Galois representation arising `strongly', `weakly', and `dc-weakly' from $\Gamma_1(M)$, and the accompanying notions of 'strong', 'weak', and 'dc-weak' eigenforms. Notice that Lemma \ref{lem:ds} above implies that these are equivalent notions when $m=1$. We show here that the three notions are not equivalent in general (for fixed level~$M$), but must leave as an open question to classify the conditions under which the three notions coincide.

We also give a few illustrative examples at the end of the section.

\subsection{Nebentypus obstructions}

We show here that in order to strip powers of $p$ from the level of a Galois representation which is strongly modular, it is necessary in general to consider the Galois representations attached to dc-weak eigenforms. The argument uses certain nebentypus obstructions that also -- in general -- prohibit `weak' eigenforms of level prime-to-$p$ from coinciding with `dc-weak' eigenforms. The argument can also be seen as a demonstration of the fact that the nebentypus restriction in Hatada's theorem (Theorem \ref{thm:hatada} in the introduction) are in fact necessary.
\smallskip

Assume $p \nmid N$ and let $f_0 \in S_k(\Gamma_1(Np^r))(\Zmod{p^m})$ be a strong eigenform. A consequence of Theorem~\ref{thm:main} is that the Galois representation $\rho_{f_0,p,m}$ dc-weakly arises from $\Gamma_1(N)$. We show that $\rho_{f_0,p,m}$ does not, in general, weakly arise from $\Gamma_1(N)$.

By the definition of `strong eigenform' there is $f \in S_k(\Gamma_1(Np^r))(\Zbar_p)$ that is normalized and an eigenform outside some positive integer $D$ and that reduces to $f_0$. Suppose that $\langle \ell \rangle f = \chi(\ell) f$ for primes $\ell$ with $\ell \nmid DNp$, with a character $\chi$ that we decompose as $\chi = \psi \omega^i \eta$ where $\psi$ is a character of conductor dividing $N$, $\omega$ is the Teichm\"uller lift of the mod~$p$ cyclotomic character, and $\eta$ is a character of conductor dividing $p^r$ and order a power of $p$. Assume $p$ is odd, $r \ge 2$, $\eta \not= 1$, and $m \ge 2$. Let $\rho_{f,p,m}$ be the mod $p^m$ representation attached to $f$. Then $\rho_{f,p,m} \cong \rho_{f_0,p,m}$. By the argument below, it is not possible to find a weak eigenform $g \in S_{k'}(\Gamma_1(N))(\Zmod{p^m})$ of any weight $k'$ such that $\rho_{g,p,m} \cong \rho_{f,p,m}$.

Let $\eta$ have order $p^s$ where $1 \le s \le r-1$. Then we may regard $\eta$ as a character $\eta : (\Z/p^r \Z)^\cross \rightarrow \Z_p[\zeta]^\cross$, where $\zeta$ is a primitive $p^s$-th root of unity. Assume there exists a weak eigenform $g$ on $\Gamma_1(N)$ such that $\rho_{f,p,m} \cong \rho_{g,p,m}$. As $g$ is an eigenform for $\langle \ell \rangle$ for primes $\ell$ with $\ell \nmid DNp$, we have that $\langle \ell \rangle g = \psi'(\ell) g$, where
$$
\psi' : (\Z/N\Z)^\cross \rightarrow \overline{\Z/p^m\Z}^\cross
$$
is a mod $p^m$ character of conductor dividing $N$. Since $\rho_{f,p,m} \cong \rho_{g,p,m}$, we have that $\det \rho_{g,p,m} = \det \rho_{f,p,m}$. Now, we know that
$$
\det \rho_{f,p,m} \equiv \epsilon^{k-1} \psi \omega^i \eta \pmod{p^m} ,
$$
with $\epsilon$ the $p$-adic cyclotomic character. Also, from the construction of the Galois representation attached to $g$ (cf.\ \cite[Th\'eo\-r\`eme 3]{carayol}), we have that
$$
\det \rho_{g,p,m} \equiv \epsilon^{k'-1} \psi' \pmod{p^m} .
$$

Hence, after restricting to the inertia group at $p$, we have that
$$
\epsilon^{k'-1} \equiv \eta \epsilon^{k-1} \pmod{p^m}
$$
as characters of $\Z_p^\cross$, or equivalently $\eta \equiv \epsilon^{k'-k} \pmod {p^m}$.

The cyclotomic character $\epsilon(x) = x$ has values in $\Z_p$, however the image of the character $\eta$ in $\Z_p[\zeta]$ contains~$\zeta$. Since $m \ge 2$, the injection $$\Z_p/(p^m) \hookrightarrow \Z_p[\zeta]/(1-\zeta)^{(m-1)p^{s-1}(p-1)+1}$$ is not a surjection. Thus, the reduction $\bmod{\, p^m}$ of~$\epsilon^{k'-k}$ has values in $\Z_p/(p^m)$, but the reduction
$\bmod{\, p^m}$ of~$\eta$ does not. This contradicts the equality $\eta \equiv \epsilon^{k'-k} \pmod{p^m}$.

Note for $m=1$, we always have $\eta \equiv 1 \pmod p$ and hence it is possible to have the equality of characters in this situation.

Although the main purpose of this section is to show that there exist $\rho_{f,p,m}$ which arise strongly from $\Gamma_1(Np^r)$ and do not arise weakly from $\Gamma_1(N)$, we note the proof shows there exist dc-weak eigenforms of level $N$ which are not weak eigenforms of level $N$.

\subsection{On the weights in divided congruences} In this subsection we show that under {\it certain} conditions, the weights occurring in a dc-weak eigenform satis\-fy enough congruence conditions so that one can equalize them using suitable powers of Eisenstein series, a technique which was used in \cite{ckr}. In fact, Corollary \ref{weights_cong_new} below is a generalization of some of the results in \cite{ckr}, using different methods. We impose here that $p>2$.

\begin{lem}\label{lem:independence}
Let $\o$ be a local ring with maximal ideal $\p$, and let $M$ be a finite projective $\o$-module. If $\bar{f}_1, \ldots, \bar{f}_n \in M /\p M$ are linearly independent over $\o/\p$, then $f_1, \ldots, f_n \in M/\p^m M$ are linearly independent over $\o/\p^m$.
\end{lem}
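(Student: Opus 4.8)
The plan is to reduce everything to elementary linear algebra over $\o$ by first replacing $M$ with a free module. Since $\o$ is local and $M$ is finitely generated and projective, $M$ is in fact free; this is a standard consequence of Nakayama's lemma (lift a basis of $M/\p M$ to a minimal generating set, obtaining a surjection $\o^d \twoheadrightarrow M$, which splits by projectivity, and the complementary summand $K$ is finitely generated and satisfies $K = \p K$, hence vanishes). So I may assume $M = \o^d$ and regard each $f_i$ as a column vector, assembling them into a $d \times n$ matrix $F$ over $\o$ whose $i$-th column is $f_i$.

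Next I would exploit the hypothesis on the $\bar{f}_i$. Over the residue field $k := \o/\p$, the reduction $\bar{F}$ has the $\bar{f}_i$ as its columns, and by assumption these are linearly independent; hence $\bar{F}$ has rank $n$ (in particular $n \le d$). By the standard fact that a matrix over a field of rank $n$ possesses a nonvanishing $n \times n$ minor, there is a choice of $n$ of the $d$ rows so that the corresponding $n \times n$ submatrix $A$ satisfies $\det A \not\equiv 0 \pmod{\p}$. Because $\o$ is local, any element outside $\p$ is a unit, so $\det A \in \o^\times$ and therefore $A \in \GL_n(\o)$, with inverse $A^{-1}$ again having entries in $\o$.

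Finally I would run the independence argument directly. Suppose $\sum_{i=1}^n c_i f_i \equiv 0 \pmod{\p^m M}$ for some $c_i \in \o$; writing $\mathbf{c} = (c_1, \dots, c_n)^{\mathrm{t}}$, this says $F\mathbf{c} \in \p^m \o^d$. Projecting onto the chosen $n$ coordinates gives $A\mathbf{c} \in \p^m \o^n$, and applying $A^{-1}$ yields $\mathbf{c} = A^{-1}(A\mathbf{c}) \in \p^m \o^n$, i.e.\ $c_i \in \p^m$ for every $i$. Thus the only relation among the $f_i$ over $\o/\p^m$ is the trivial one, which is exactly the asserted linear independence.

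I expect the only point requiring care to be the reduction to the free case, namely the freeness of a finitely generated projective module over a local ring (valid with no Noetherian hypothesis); everything afterwards is the observation that independence over the residue field produces a unit minor, whose matrix then transports a relation mod $\p^m$ back onto the coefficients. Note that no completeness of $\o$ and no finiteness of $\o/\p$ are used, so the statement holds for an arbitrary local ring.
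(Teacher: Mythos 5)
Your proof is correct. The paper disposes of the lemma in one line by citing a structure theorem from Lang's \emph{Algebra}: over a local ring, elements of a finite projective module whose reductions are linearly independent over $\o/\p$ span a free direct summand, so that $M \cong F \oplus \bigoplus_{i=1}^n \o f_i$ with $F$ free, and independence modulo $\p^m$ is then immediate. You instead reduce to $M = \o^d$ (your Nakayama argument for the freeness of a finite projective module over a local ring is the standard one and is complete) and then extract an $n \times n$ submatrix $A$ of the coefficient matrix whose determinant lies outside $\p$, hence is a unit, and use $A^{-1}$ to transport any relation modulo $\p^m$ back onto the coefficients. The two arguments rest on the same two ingredients \dash--- Nakayama's lemma and the fact that $\o \setminus \p = \o^\times$ \dash--- but yours is self-contained where the paper's is a citation; in effect your unit-minor computation is a direct proof of exactly the special case of the cited theorem that is needed, since projection onto the chosen $n$ coordinates exhibits $\bigoplus_{i=1}^n \o f_i$ as a free direct summand of $\o^d$. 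Your closing observation that no completeness of $\o$ and no finiteness of $\o/\p$ are required is accurate and consistent with how the lemma is used in the paper, where $\o$ is the ring of integers of a finite extension of $\Q_p$ and $M$ is a free module of finite rank of the form $\o[[q]]/(q^L)$.
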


\begin{proof} By \cite[Chap.\ X, Theorem 4.4]{L}, we have that $M$ is isomorphic to $F \oplus \bigoplus_{i=1}^n \o f_i$ with $F$ a free $\o$-module, from which the assertion immediately follows.
\end{proof}

\begin{prop}\label{prop:weights_cong} Let $\OO$ be the ring of integers of a finite extension of $\Q_p$. Let $f_i \in S_{k_i}(\Gamma_1(Np^r))(\OO)$ for $i=1,\dots,t$, where the $k_i$ are distinct, and suppose $[\ell] f_i = \ell^{k_i} \psi_i(\ell) \eta_i(\ell) f_i$, for $\ell \nmid DNp$ (for some positive integer~$D$), where $\psi_i : \Z/N\Z^\cross \rightarrow \OO^\cross$, $\eta_i : \Z/p^r\Z^\cross \rightarrow \OO^\cross$ have finite order. Suppose also that the $q$-expansions $f_i(q) \pmod{p}$ are linearly independent over $\overline{\Z/p\Z} = \overline{\F}_p$.

Put $f:=\sum_{i=1}^t f_i$ and assume that $f$ is an eigenform for the operators $[\ell]$ (e.g. this is the case if $f$ is a dc-weak eigenform).

Then $k_1 \equiv k_2 \equiv \dots \equiv k_t \pmod{\varphi(p^m)/h}$, where $\varphi$ is the Euler-$\varphi$-function, and $h$ is the least common multiple of the orders of the $\eta_i \pmod{p^m}$.
\end{prop}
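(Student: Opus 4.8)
The plan is to convert the single scalar relation expressing that $f=\sum_i f_i$ is a $\stroke{\ell}$-eigenform modulo $p^m$ into one relation per weight, and then to read off the weight congruences from a well-chosen prime. To this end I first fix $b\ge\max_i k_i$ and view all $f_i$ inside the $\OO$-module $M:=S^b(\Gamma_1(Np^r))(\OO)$, which is free of finite rank by Lemmas~\ref{lem:intstr} and~\ref{lem:basechange}. The hypothesis that the $f_i(q)\bmod p$ are linearly independent over $\Fbar_p$ implies, by additivity of the $q$-expansion map, that the classes $\bar f_i\in M/\p M$ are linearly independent over $\OO/\p\subseteq\Fbar_p$; Lemma~\ref{lem:independence}, applied with the exponent $\gamma_{\OO}(m)$ in place of $m$ (its proof works for any power of $\p$), then yields linear independence of the $f_i$ in $M/\p^{\gamma_{\OO}(m)}M$ over $\OO/\p^{\gamma_{\OO}(m)}$, i.e.\ the $f_i$ remain independent modulo $p^m$.

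Next I would exploit the eigenform property. After enlarging $\OO$ so that it contains a lift of the relevant $\stroke{\ell}$-eigenvalue, write $\stroke{\ell}f\equiv\lambda(\ell)f\pmod{p^m}$; expanding $\stroke{\ell}f=\sum_i\ell^{k_i}\psi_i(\ell)\eta_i(\ell)f_i$ and subtracting $\lambda(\ell)f$ gives $\sum_i\bigl(\ell^{k_i}\psi_i(\ell)\eta_i(\ell)-\lambda(\ell)\bigr)f_i\equiv0\pmod{p^m}$. By the independence just established every coefficient vanishes, so $\ell^{k_i}\psi_i(\ell)\eta_i(\ell)\equiv\lambda(\ell)\pmod{p^m}$ for all $i$, and comparing two indices,
$$\ell^{k_i}\psi_i(\ell)\eta_i(\ell)\equiv\ell^{k_j}\psi_j(\ell)\eta_j(\ell)\pmod{p^m}.$$

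Finally I would isolate the $p$-part. Restricting to primes $\ell\equiv1\pmod N$ forces $\psi_i(\ell)=1$ for all $i$ (each $\psi_i$ has conductor dividing $N$), leaving $\ell^{k_i-k_j}\equiv\eta_j(\ell)\eta_i(\ell)^{-1}\pmod{p^m}$; raising to the $h$-th power trivializes the right-hand side, since $\eta_i^h\equiv\eta_j^h\equiv1\pmod{p^m}$ by the definition of $h$, giving $\ell^{h(k_i-k_j)}\equiv1\pmod{p^m}$. As $\ell^{h(k_i-k_j)}\in\Z_p$ and $\Z/p^m\Z=\Z_p/p^m\Z_p\hookrightarrow\Zmod{p^m}$ is injective — precisely the purpose of the exponent $\gamma_{\OO}(m)$ — this is a genuine congruence in $\Z/p^m\Z$. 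Choosing $\ell$ in addition to be a primitive root modulo $p^m$ (possible by Dirichlet's theorem together with the CRT, as $p$ is odd so $(\Z/p^m\Z)^\times$ is cyclic, and avoiding the divisors of $D$), the element $\ell$ has order $\varphi(p^m)$, whence $\varphi(p^m)\mid h(k_i-k_j)$. This is precisely the asserted congruence $k_i\equiv k_j\pmod{\varphi(p^m)/h}$ for every pair $i,j$.

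The crux is the passage from mod-$p$ to mod-$p^m$ linear independence: all the downstream scalar manipulations are formal once the $t$ relations are available, but those relations hold only because the $f_i$ stay independent modulo $p^m$, which fails for general congruent forms and is supplied by Lemma~\ref{lem:independence} through the projectivity of $M$. A secondary bookkeeping point is the distinction between the exponent $\gamma_{\OO}(m)$ governing reductions of $\OO$-coefficients and the plain integer congruence modulo $p^m$ in the final line; the two are reconciled by the injection $\Z/p^m\Z\hookrightarrow\Zmod{p^m}$.
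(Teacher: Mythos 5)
Your proof is correct and follows essentially the same route as the paper's: extract the per-weight eigenvalue congruences $\lambda_i\equiv\lambda\pmod{p^m}$ via Lemma~\ref{lem:independence} (the paper applies that lemma to $M=\OO[[q]]/(q^L)$ rather than to $S^b(\Gamma_1(Np^r))(\OO)$, an immaterial difference), then specialize to primes $\ell\equiv 1\pmod N$ generating $(\Z/p^m\Z)^\times$ and raise to the $h$-th power. The only cosmetic divergences are your appeal to Dirichlet plus CRT where the paper cites Chebotarev, and your more explicit $\gamma_{\OO}(m)$ bookkeeping.
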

\begin{proof} Denote by $\lambda$, $\lambda_i$ the $[\ell]$-eigenvalue of $f$ and the $f_i$, respectively. Then we have $\lambda f \equiv \sum_{i=1}^t \lambda_i f_i(q) \pmod{p^m}$, whence $\sum_{i=1}^t (\lambda - \lambda_i) f_i(q) \equiv 0 \pmod{p^m}$. Lemma \ref{lem:independence} applied with $M = \OO[[q]]/(q^L)$ for suitable $L$ large enough (for instance, take $L$ so that the $q$-expansion map $\oplus_{i=1}^t S_{k_i}(\Gamma_1(Np^r))(\OO) \rightarrow \OO[[q]]/(q^L)$ is injective), shows that $\lambda \equiv \lambda_i \pmod{p^m}$ for all $i$. In particular, we have $\lambda_i \equiv \lambda_j \pmod{p^m}$ for all $i,j$.

We have $\lambda_i = \ell^{k_i} \psi_i(\ell) \eta_i(\ell)$. If $\ell \equiv 1 \pmod{N}$ then $\psi_i(\ell) = 1$. For such $\ell$ we thus have
$$
\ell^{k_i h} = \lambda_i^h \equiv \lambda_j^h = \ell^{k_j h} \pmod{p^m}
$$
for all $i,j$, by the definition of $h$.

By Chebotarev's density theorem, we can choose $\ell$ so that in addition to the property $\ell \equiv 1 \pmod{N}$, we have that $\ell$ is a generator of $(\Z/p^m\Z)^\cross$ (here we use that $p$ is odd and that $p\nmid N$.) It then follows that $k_1 h \equiv k_2 h \equiv \ldots \equiv k_t h \pmod{\varphi(p^m)}$ as desired.
\end{proof}

The proposition has the following application. Suppose that $f$ is a dc-weak eigenform mod $p^m$ at level $N$ of the form $f=\sum_{i=1}^t f_i$ with $f_i \in S_{k_i}(\Gamma_1(N))(\OO)$ for $i=1,\dots,t$, where the $k_i$ are distinct. Suppose that each $f_i$ has a nebentypus and that, crucially, the $q$-expansions $f_i(q) \pmod{p}$ are linearly independent over $\overline{\F}_p$.

Then the proposition applies with $h=1$ and shows that we have $k_1 \equiv \dots \equiv k_t \pmod{\varphi(p^m)}$. Without loss of generality suppose that $k_t$ is the largest of the weights. When $p\ge 5$, we can use, as in \cite{ckr}, the Eisenstein series $E := E_{p-1}$ of weight $p-1$ and level $1$, normalized in the usual way so that its $q$-expansion is congruent to $1 \pmod p$. The form $\tilde{E} := E^{p^{m-1}}$ is of weight $\varphi(p^m) = (p-1)p^{m-1}$, level $1$, and is congruent to $1 \pmod{p^m}$. Due to the congruence on the weights, we may multiply each $f_i$ for $i=1,\dots, t-1$ with a suitable power of $\tilde{E}$ so as to make it into a form of weight $k_t$ with the same $q$-expansion mod $p^m$. Consequently, in weight $k_t$ and level $N$ there is a form that is congruent to $f$ mod $p^m$, i.e., $f$ is in fact a weak eigenform mod $p^m$ at level $N$.
\smallskip

We also record the following variant of Proposition \ref{prop:weights_cong} as it represents a generalization of some of the results of \cite{ckr}.

\begin{cor}\label{weights_cong_new} Let $\OO$ be the ring of integers of a finite extension of $\Q_p$. Let $f_i \in S_{k_i}(\Gamma_1(Np^r))(\OO)$ for $i=1,\dots,t$ satisfy $f_1(q) + \ldots + f_t(q) \equiv 0 \pmod{p^m}$, where the $k_i$ are distinct, and suppose $[\ell] f_i = \ell^{k_i} \psi_i(\ell) \eta_i(\ell) f_i$, for $\ell \nmid DNp$ (for some positive integer~$D$), where $\psi_i : \Z/N\Z^\cross \rightarrow \OO^\cross$, $\eta_i : \Z/p^r\Z^\cross \rightarrow \OO^\cross$ have finite order. Suppose for some $i$, the $q$-expansions $f_j(q) \pmod{p}$, $j \not= i$ are linearly independent over $\overline{\Z/p\Z} = \overline{\F}_p$.

Then $k_1 \equiv k_2 \equiv \dots \equiv k_t \pmod{\varphi(p^m)/h}$, where $\varphi$ is the Euler-$\varphi$-function, and $h$ is the least common multiple of the orders of the $\eta_j \pmod{p^m}$.
\end{cor}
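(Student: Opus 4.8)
The plan is to reduce Corollary~\ref{weights_cong_new} to Proposition~\ref{prop:weights_cong}, since the two statements are nearly identical: the only differences are that we now assume $\sum_i f_i(q) \equiv 0 \pmod{p^m}$ (rather than that $f=\sum_i f_i$ is an eigenform for the $[\ell]$), and that the linear independence mod~$p$ is assumed only for the $t-1$ forms $f_j$ with $j \neq i$ (rather than for all $t$ of them). First I would fix the index~$i$ for which the $f_j(q)\pmod p$, $j\neq i$, are linearly independent over $\overline{\F}_p$. The idea is to use the vanishing congruence $\sum_{j=1}^t f_j(q)\equiv 0\pmod{p^m}$ to manufacture an eigenvalue relation for the remaining forms, exactly mirroring the step in the proof of Proposition~\ref{prop:weights_cong} where the relation $\sum_i(\lambda-\lambda_i)f_i(q)\equiv 0\pmod{p^m}$ was exploited.

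Concretely, I would apply the operator $[\ell]$ to the congruence $\sum_{j=1}^t f_j(q)\equiv 0\pmod{p^m}$ for a prime $\ell\nmid DNp$. Since $[\ell]f_j=\lambda_j f_j$ with $\lambda_j=\ell^{k_j}\psi_j(\ell)\eta_j(\ell)$, this yields $\sum_{j=1}^t \lambda_j f_j(q)\equiv 0\pmod{p^m}$. Subtracting $\lambda_i$ times the original congruence gives
$$
\sum_{j\neq i}(\lambda_j-\lambda_i)\,f_j(q)\equiv 0\pmod{p^m}.
$$
Now I would invoke Lemma~\ref{lem:independence}, applied with $M=\OO[[q]]/(q^L)$ for $L$ large enough that the $q$-expansion map on $\bigoplus_{j\neq i}S_{k_j}(\Gamma_1(Np^r))(\OO)$ is injective, using precisely the hypothesis that the $f_j(q)\pmod p$ for $j\neq i$ are linearly independent over $\overline{\F}_p$. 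The lemma then forces $\lambda_j\equiv\lambda_i\pmod{p^m}$ for every $j\neq i$, hence $\lambda_1\equiv\lambda_2\equiv\cdots\equiv\lambda_t\pmod{p^m}$.

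From this point the argument is verbatim the second half of the proof of Proposition~\ref{prop:weights_cong}: for primes $\ell\equiv 1\pmod N$ one has $\psi_j(\ell)=1$, so raising to the $h$-th power (where $h$ is the lcm of the orders of the $\eta_j\pmod{p^m}$) kills the $\eta_j$-contribution and gives $\ell^{k_j h}\equiv\ell^{k_i h}\pmod{p^m}$ for all $j$. Choosing $\ell$ by Chebotarev to be simultaneously $\equiv 1\pmod N$ and a generator of $(\Z/p^m\Z)^\times$ (possible because $p$ is odd and $p\nmid N$) then forces $k_1h\equiv\cdots\equiv k_th\pmod{\varphi(p^m)}$, which is the claimed congruence $k_1\equiv\cdots\equiv k_t\pmod{\varphi(p^m)/h}$.

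I do not expect any serious obstacle here; the corollary is genuinely a minor variant of the proposition. The one point requiring a little care is the bookkeeping of the index~$i$: the linear independence is assumed only for the $t-1$ forms $f_j$ with $j\neq i$, so I must make sure the derived relation involves only those forms, which is exactly what subtracting $\lambda_i$ times the original congruence achieves (the coefficient of $f_i$ becomes $\lambda_i-\lambda_i=0$). Once $\lambda_i\equiv\lambda_j$ is established for $j\neq i$, transitivity gives equality of all the eigenvalues mod $p^m$, and the Chebotarev step applies uniformly to all indices.
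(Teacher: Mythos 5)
Your proof is correct and takes essentially the same route as the paper's: the paper merely phrases the reduction as feeding the congruence $-f_i(q) \equiv \sum_{j\neq i} f_j(q) \pmod{p^m}$ into the proof of Proposition~\ref{prop:weights_cong}, which produces exactly your relation $\sum_{j\neq i}(\lambda_j-\lambda_i)f_j(q)\equiv 0\pmod{p^m}$, and then concludes via Lemma~\ref{lem:independence} and the Chebotarev step in the same way.
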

\begin{proof} Without loss of generality, assume $i = 1$. As $-f_1(q) \equiv \sum_{i=2}^t f_i \pmod{p^m}$ the proof of Proposition \ref{prop:weights_cong} shows that we have
$$
\ell^{k_1} \psi_1(\ell) \eta_1(\ell) \equiv \ell^{k_i} \psi_i(\ell) \eta_i(\ell) \pmod{p^m}
$$
for $i=2,\ldots,t$, and the desired congruences then follow in the same way.
\end{proof}

\subsection{Examples}
We do not know whether the notions `strong' and `weak' at a fixed level prime-to-$p$ coincide in general when the weight is allowed to vary. However, the following examples seem to suggest that they do not. It is of obvious interest to resolve this question, perhaps first by looking for a numerical counterexample. (There would be theoretical problems to consider before one could do that, specifically obtaining a weight bound.)
\smallskip

In \cite{taixes-wiese}, Section 4.2, one has an example mod~$9$ in weight~$2$ for $\Gamma_0(71)$. Note: the notion of strong and weak eigenform in loc.\ cit.\ is at a single weight, i.e.\ the weight is also fixed, which differs from our terminology. The example in loc.\ cit.\ shows that there is a cusp form in $S_2(\Gamma_0(71))$ which when reduced mod $9$ is an eigenform mod $9$, and which does not arise from the reduction mod $9$ of an eigenform (for all $T_n$) on $S_2(\Gamma_0(71))$.
\smallskip

As a general mechanism for producing eigenvalues mod $p^2$ that do not lift to characteristic $0$ (at the same weight), consider the following setup: Let $p$ be an odd prime. Suppose $f, g \in M = \Z_p^2$ and $f \equiv g \not\equiv 0 \pmod{pM}$. Suppose that $T$ is an endomorphism of $M$, that $Tf = \lambda f$, that $Tg = \mu g$, and that $\left\{ f, g\right\}$ is a basis for $M \otimes \overline{\Q}_p$. Then $\lambda \equiv \mu \pmod p$. Suppose further that $\lambda \not\equiv \mu \pmod {p^2}$. Consider $h = f + g$. Then $T h - \frac{\lambda + \mu}{2} h = \lambda f + \mu g - \frac{\lambda + \mu}{2} h = \frac{\lambda - \mu}{2} (f - g)$, so we have $T h \equiv \frac{\lambda + \mu}{2} h \pmod {p^2 M}$. Thus, $\frac{\lambda + \mu}{2} \in \Z_p/p^2\Z_p$ is an eigenvalue which does not lift to $\overline{\Z}_p$ as an eigenvalue of $T$ acting on $M \otimes \overline{\Q}_p$.
\smallskip

Using MAGMA, cf.\ \cite{Magma}, we found the following concrete example involving modular forms of the same weight. Let $S$ be the free $\Z_3$-module of rank $5$ which is obtained from the image of $S_2(\Gamma_0(52))(\Z_3)$ in $\Z_3[[q]]$ under the $q$-expansion map. Consider
\medskip
\begin{verbatim}
f = q + 2*q^5 - 2*q^7 - 3*q^9 - 2*q^11 - q^13 + 6*q^17 - 6*q^19
+ O(q^20)

g = q + q^2 - 3*q^3 + q^4 - q^5 - 3*q^6 + q^7 + q^8 + 6*q^9 - q^10
- 2*q^11 - 3*q^12 - q^13 + q^14 + 3*q^15 + q^16 - 3*q^17 + 6*q^18
+ 6*q^19 + O(q^20)
\end{verbatim}
\medskip
which are the $q$-expansions, respectively, of newform $1$ in $S_2(\Gamma_0(52))$, and newform $2$ in $S_2(\Gamma_0(26))$, in MAGMA's internal labeling system.

By Lemma 4.6.5 of \cite{miyake} the series $\sum_{2\nmid n} a_n(g)$ is the $q$-expansion of an element $\tilde{g}$ of $S_2(\Gamma_0(52))$. Also, $\tilde{g}$ is an eigenform for all the Hecke operators $T_n$ for all $n \ge 1$ (this can be checked from the explicit formulae of $T_n$ acting on $q$-expansions for $n$ prime).

We have that $\left\{ f, \tilde{g} \right\}$ is $\overline{\Q}_3$-linearly independent, and that $a_n(f) \equiv a_n(\tilde{g}) \pmod 3$ for all $n$ (to see that the congruence holds for all $n$ use the Sturm bound, cf.\ Theorem 1 of \cite{sturm}. The bound is $14$ in this case.)

Let $h = \frac{f+\tilde{g}}{2}$ so that
\medskip
\begin{verbatim}
h = q - 3/2*q^3 + 1/2*q^5 - 1/2*q^7 + 3/2*q^9 - 2*q^11
+ 3/2*q^15 + 3/2*q^17 + O(q^20)
\end{verbatim}
\medskip

By the arguments above, $(h \bmod 9)$ is a $\Z_3/9\Z_3$-eigenform for the Hecke operators $T_n$ for all $n \ge 1$. Furthermore, the system of eigenvalues for $T_n$ for all $n \ge 1$ does not arise from the reductions mod $9$ of $f$, nor $g$, as well as $g_1$, the other newform in $S_2(\Gamma_0(26))$. Thus, we see finally that the system of eigenvalues $\in \Z_3/9\Z_3$ for the $T_n$ for all $n \ge 1$, acting on $h$, do not lift to $\overline{\Z}_3$ inside $S \otimes \overline{\Q}_3$.

On the other hand, we make the remark that we have attached a mod $9$ Galois representation to $h$ by Corollary~\ref{cor:galoisrepAdc} (note: the residual mod $3$ representation is absolutely irreducible).


\end{document}